\newcommand\cyr{%
\renewcommand\rmdefault{wncyr}%
\renewcommand\sfdefault{wncyss}%
\renewcommand\encodingdefault{OT2}%
\normalfont
\selectfont}
\DeclareTextFontCommand{\textcyr}{\cyr}
\newcommand{\mint}{{\times}\kern-0.89em{\int}} 
\newcommand{\bdp}{\mathrm{BDP}}
\newcommand{\sha}{\textrm{{\cyr SH}}}
\newcommand{\bQ}{\mathbf{Q}}
\newcommand{\bZ}{\mathbf{Z}}
\newcommand{\pp}{{\wp}}
\newtheorem{thm}{Theorem}[section]
\newtheorem{cor}[thm]{Corollary}
\newtheorem{lem}[thm]{Lemma}
\newtheorem{prop}[thm]{Proposition}
\newtheorem{conj}[thm]{Conjecture}
\newtheorem{intro-conj1}[thm]{Conjecture}
\newtheorem*{assumption-spade}{Hypothesis~{$\spadesuit$}}
\newtheorem*{assumption-heart}{Hypothesis~{$\heartsuit$}}
\newtheorem*{assumption-CR}{Condition~CR}
\newtheorem*{ThmA}{Theorem A}
\newtheorem*{ThmB}{Theorem B}
\theoremstyle{definition}
\newtheorem{defn}[thm]{Definition}
\newtheorem{rem}[thm]{Remark}
\begin{document}
\title{A proof of Perrin-Riou's Heegner point main conjecture}
\author{Ashay Burungale}
\address[Ashay A. Burungale]{Department of Mathematics, California Institute of Technology, 1200 E California Blvd, 
CA 91125, USA}
\email{ashay@caltech.edu}
\author{Francesc Castella}
\address[Francesc Castella]{Department of Mathematics, University of California, Santa Barbara, CA 93106, USA}
\email{castella@ucsb.edu}
\author{Chan-Ho Kim}
\address[Chan-Ho Kim]{Center for Mathematical Challenges,  Korea Institute for Advanced Study, 85 Hoegiro, Dongdaemun-gu, 
Seoul 02455, Republic of Korea} 
\email{chanho.math@gmail.com}
\subjclass[2010]{11R23 (Primary); 11F33 (Secondary)}
\keywords{Iwasawa theory, Heegner points, Euler systems, $p$-adic $L$-functions}
\maketitle

\begin{abstract}
Let $E/\bQ$ be an elliptic curve of conductor $N$, let $p>3$ be a prime where $E$ has good ordinary reduction, and let $K$ be an imaginary quadratic field satisfying the Heegner hypothesis. 
In 1987, Perrin-Riou formulated an Iwasawa main conjecture for the Tate--Shafarevich group of $E$ over the anticyclotomic $\bZ_p$-extension of $K$ in terms of Heegner points. 

In this paper, we give a proof of Perrin-Riou's conjecture under mild hypotheses. Our proof builds on Howard's theory of bipartite Euler systems and Wei~Zhang's work on Kolyvagin's conjecture. In the case when $p$ splits in $K$, we also obtain a proof of the Iwasawa--Greenberg main conjecture for the $p$-adic $L$-functions of Bertolini--Darmon--Prasanna. 
\end{abstract}

\setcounter{tocdepth}{1}
\tableofcontents

\section{Introduction}

\subsection{The Heegner point main conjecture}

Let $E/\mathbf{Q}$ be an elliptic curve of conductor $N$, and let $p>3$ be a prime where $E$ has good ordinary reduction. Let $K$ be an imaginary quadratic field of discriminant $D_K<0$ prime to $Np$. Throughout the paper, we assume that 
\begin{equation}\label{assu:disc}
\textrm{$D_K$ is odd, and $D_K\neq -3$.}\tag{disc}
\end{equation}
Write $N$ as the product
\[
N=N^+N^-
\]
with $N^+$ (resp. $N^-$) divisible only by primes that split (resp. remain inert) in $K$, and assume  the following \emph{generalized Heegner hypothesis}:  
\begin{equation}\label{assu:gen_heeg}
\textrm{$N^-$ is the squarefree product of an even number of primes.}\tag{Heeg}
\end{equation}
 
Under this hypothesis, exploiting the modularity of $E$, \cite{bcdt}, for every positive integer $n$ prime to $N$ the theory of complex multiplication yields a construction of Heegner points $y_n\in E(H_n)$ defined over the ring class field $H_n$ of $K$ of conductor $n$. More precisely, letting $X_{N^+,N^-}$ be the Shimura curve attached to an indefinite quaternion algebra $B/\mathbf{Q}$ of discriminant $N^-$ together with a $\Gamma_0(N^+)$-level structure, the points $y_n$ are obtained as the image of special points on $X_{N^+,N^-}$ under a fixed  parametrization $\pi:X_{N^+,N^-}\rightarrow E$.

Let $K_\infty$ be the anticyclotomic $\mathbf{Z}_p$-extension of $K$, and for any number field $L$ let $\mathrm{Sel}_{p^\infty}(E/L)$ and $S_p(E/L)$ be the Selmer groups of $E/L$ fitting into the descent exact sequences
\[
0\rightarrow E(L)\otimes\mathbf{Q}_p/\mathbf{Z}\rightarrow\mathrm{Sel}_{p^\infty}(E/L)\rightarrow\sha(E/L)[p^\infty]\rightarrow 0,
\]
\[
0\rightarrow E(L)\otimes\mathbf{Z}_p\rightarrow S_p(E/L)\rightarrow\varprojlim_j\sha(E/L)[p^j]\rightarrow 0.
\] 
The study of anticyclotomic Iwasawa theory for elliptic curves was initiated by Mazur \cite{mazur-icm}, who conjectured in particular that the Pontryagin dual
\[
X_{\infty}:=\mathrm{Hom}_{\mathbf{Z}_p}(\varinjlim_n\mathrm{Sel}_{p^\infty}(E/K_n),\mathbf{Q}_p/\mathbf{Z}_p)
\]
has rank one over the Iwasawa algebra $\Lambda=\mathbf{Z}_p \llbracket \mathrm{Gal}(K_\infty/K )\rrbracket$. 
Under the $p$-ordinarity hypothesis, the Kummer images of Heegner points give rise to a compatible system of classes 
\[
\kappa_{\infty}\in S_\infty:=\varprojlim_n S_p(E/K_n),
\]
and it was also conjectured by Mazur that $S_\infty$ has $\Lambda$-rank one and the class $\kappa_\infty$ is not $\Lambda$-torsion. In this context, Perrin-Riou \cite{perrin-riou-heegner} (in the case $N^-=1$, and later extended by Howard \cite{howard-gl2-type} to allow $N^-\neq 1$) formulated the following variant of the Iwasawa main conjecture.

\begin{intro-conj1}[{\bf Heegner point main conjecture}]\label{conj:HPMC}
Suppose $K$ satisfies hypotheses $\mathrm{(\ref{assu:disc})}$ and $\mathrm{(\ref{assu:gen_heeg})}$.  
Then $S_\infty$ and $X_\infty$ have both $\Lambda$-rank one, and there is a finitely generated torsion $\Lambda$-module $M_\infty$ for which:
\begin{itemize}
\item[(i)] There is a $\Lambda$-module pseudo-isomorphism $X_\infty\sim\Lambda\oplus M_\infty\oplus M_\infty$. 
\item[(ii)] The characteristic ideal of $M_\infty$ satisfies $\mathrm{Char}_\Lambda(M_\infty)=\mathrm{Char}_\Lambda(M_\infty)^\iota$  and 
\[
\mathrm{Char}_\Lambda(M_{\infty})=
\mathrm{Char}_\Lambda\bigl(S_\infty/\Lambda\kappa_\infty\bigr),
\]
where $\iota:\Lambda\rightarrow\Lambda$ is the involution given by $\gamma\mapsto\gamma^{-1}$ for $\gamma\in\mathrm{Gal}(K_\infty/K)$.
\end{itemize}
\end{intro-conj1}

\begin{rem}
As formulated in \cite[Conj.~B]{perrin-riou-heegner}, the second equality of characteristic ideals in (ii) includes the factor 
$c_\pi\cdot (\#\mathcal{O}_K^\times)/2$, where $c_\pi\in\mathbf{Z}_{>0}$ is the Manin constant associated to $\pi$. However, $\mathcal{O}_K^\times=\{\pm 1\}$ by our hypothesis (\ref{assu:disc}), and $c_\pi$ is a $p$-adic unit by \cite[Cor.~3.1]{mazur-isogenies} and our hypothesis that $p\nmid N$.
\end{rem}

Mazur's conjecture on the non-triviality of $\kappa_\infty$ was first proved by Cornut--Vatsal \cite{cornut-vatsal}. Building on this, and adapting to the anticyclotomic setting the 
Kolyvagin system machinery of Mazur--Rubin \cite{mazur-rubin-book}, Howard \cite{howard-kolyvagin, howard-gl2-type} (extending earlier results by Bertolini \cite{bertolini}) reduced the proof of Conjecture~\ref{conj:HPMC} to the proof of the divisibility
\begin{equation}\label{eq:cong-div}
\mathrm{Char}_\Lambda(M_{\infty})\overset{?}\subset
\mathrm{Char}_\Lambda\bigl(S_\infty/\Lambda\kappa_\infty\bigr).
\end{equation}

More recently, the first case of this divisibility, and therefore of  Conjecture~\ref{conj:HPMC},  were obtained in \cite[Thm.~1.2]{wan-heegner} and \cite[Thm.~3.4]{castella-beilinson-flach}. The new ingredient in these works was Xin~Wan's divisibility in the Iwasawa--Greenberg main conjecture for certain Rankin--Selberg $p$-adic $L$-functions \cite{wan-rankin-selberg}, which in combination with the reciprocity law for Heegner points \cite{cas-hsieh1} yields a proof of the divisibility (\ref{eq:cong-div}). Unfortunately, the method in these works 
does not seem well suited to treat the case $N^-=1$ (i.e., the ``classical'' Heegner hypothesis), and for technical reasons they require the assumptions that $N$ is squarefree and that $p$ splits in $K$. 

In this paper, we give a proof of Conjecture~\ref{conj:HPMC} dispensing with the use of the deep results of \cite{wan-rankin-selberg} and allowing for the cases $N^-=1$, $N$ having square factors, and $p$ being inert in $K$.

\subsection{Statement of the main results} 

Let
\[
\overline{\rho}:G_{\mathbf{Q}}:=\mathrm{Gal}(\overline{\mathbf{Q}}/\mathbf{Q})\rightarrow\mathrm{Aut}_{\mathbf{F}_p}(E[p])
\]
be the Galois representation afforded by the $p$-torsion of $E$. Similarly as in  \cite{wei-zhang-mazur-tate}, we consider the following set of hypotheses on the triple $(E,p,K)$:

\begin{assumption-spade}
\label{assu:heart}
Let $\mathrm{Ram}(\overline{\rho})$ denote the set of primes $\ell\Vert N$ such that the $G_{\mathbf{Q}}$-module 
$E[p]$ is ramified at $\ell$. Then:
	\begin{itemize}
		\item[(i)] $\mathrm{Ram}(\overline{\rho})$ contains all primes $\ell\Vert N^+$,
		\item[(ii)] $\mathrm{Ram}(\overline{\rho})$ contains all primes  $\ell\vert N^-$ with $\ell\equiv\pm 1 \pmod{p}$,
		\item[(iii)] If $N$ is not squarefree, then either $\mathrm{Ram}(\overline{\rho})$ contains a prime $\ell\vert N^-$ or there are at least two primes $\ell\Vert N^+$.
	\end{itemize}
\end{assumption-spade}

Following the terminology introduced in \cite{mazur-IMC-for-E}, we say that  the prime $p$ is \emph{non-anomalous} if $p\nmid\vert\widetilde{E}(\mathbf{F}_w)\vert$ for all primes $w\vert p$ of $K$, where $\mathbf{F}_w$ is the residue field of $w$. 

Our main result towards Conjecture~\ref{conj:HPMC} is the following.

\begin{ThmA}\label{thm:main-intro}
Let $p>3$ be a prime where $E$ has good ordinary reduction, and let $K$ be an imaginary quadratic field satisfying $\mathrm{(\ref{assu:gen_heeg})}$ and $\mathrm{(\ref{assu:disc})}$. Assume that: 
\begin{itemize}
\item Hypothesis~$\spadesuit$ holds for (E,p,K),
\item $\overline{\rho}$ is surjective, 
\item $p$ is non-anomalous. 
\end{itemize} 
Then the Heegner point main conjecture holds.
\end{ThmA}

As a consequence of this result, we also obtain new cases of the Iwasawa--Greenberg main conjecture for certain Rankin--Selberg $p$-adic $L$-functions. Let $f\in S_2(\Gamma_0(N))$ be the newform associated with $E$. Assuming that 
\begin{equation}\label{assu:spl}
\textrm{$p\mathcal{O}_K=\mathfrak{p}\overline{\mathfrak{p}}$ splits in $K$}\tag{spl}
\end{equation}
and that $N^-=1$, Bertolini--Darmon--Prasanna  \cite{bertolini-darmon-prasanna-duke} constructed a $p$-adic $L$-function
\[ 
\mathscr{L}_{\mathfrak{p}}^{\bdp}\in\Lambda^{\mathrm{ur}}:=\Lambda\hat{\otimes}_{\mathbf{Z}_p}\mathbf{Z}^{\mathrm{ur}}_p
\]
with the property that $(\mathscr{L}_{\mathfrak{p}}^{\bdp})^2$ interpolates certain central critical $L$-values for the Rankin--Selberg convolution of $f$ with theta series attached to $K$ of weight $\ell\geqslant 3$, where $\mathbf{Z}^{\mathrm{ur}}_p$ is the completion of the ring of integers of the maximal unramified extension of $\mathbf{Q}_p$. The construction of $\mathscr{L}_{\mathfrak{p}}^{\bdp}$ was extended by Brooks \cite{brooks} to the case $N^-\neq 1$, and its corresponding interpolation property was deduced from calculations in \cite{prasanna} in the case where $N$ is squarefree. 

The Iwasawa--Greenberg main conjecture \cite{greenberg-motives} in this case predict that the square of $\mathscr{L}_{\mathfrak{p}}^{\bdp}$ generates the characteristic ideal of the Pontryagin dual of a Selmer group
\[
\mathrm{Sel}^{}_{\emptyset,0}(K,\mathbf{W})\subset \varinjlim_n\mathrm{H}^1(K_n,E[p^\infty])
\]
differing from $\varinjlim_n\mathrm{Sel}_{p^\infty}(E/K_n)$ in its defining local conditions at the primes above $p$.

\begin{conj}[{\bf Iwasawa--Greenberg main conjecture for $\mathscr{L}_{\mathfrak{p}}^{\bdp}$}]\label{conj:BDP}
Suppose $K$ satisfies $\mathrm{(\ref{assu:disc})}$, $\mathrm{(\ref{assu:gen_heeg})}$, and $\mathrm{(\ref{assu:spl})}$. Then the Pontryagin dual $X^{}_{\emptyset,0}$ of $\mathrm{Sel}^{}_{\emptyset,0}(K,\mathbf{W})$ is $\Lambda$-torsion, and 
\[
\mathrm{Char}_\Lambda(X^{}_{\emptyset,0})=(\mathscr{L}_{\mathfrak{p}}^{\mathrm{\bdp}})^2
\]
as ideals in $\Lambda^{\mathrm{ur}}$.
\end{conj}

In $\S\ref{sec:p-adicL}$ we extend the explicit reciprocity law of \cite{cas-hsieh1} (for weight $2$ forms) to the case $N^-\neq 1$, and use it to establish  the equivalence between  Conjectures~\ref{conj:HPMC} and \ref{conj:BDP}. (Such extension of \cite[Thm.~5.7]{cas-hsieh1} was used in the aforementioned works \cite{castella-beilinson-flach}, \cite{wan-heegner}, but the details were missing in the literature.) Together with Theorem~A we thus obtain the following. 

\begin{ThmB}\label{thm:main-BDP}
Let $p>3$ be a prime where $E$ has good ordinary reduction, 
and let $K$ be an imaginary quadratic field satisfying $\mathrm{(\ref{assu:gen_heeg})}$, $\mathrm{(\ref{assu:disc})}$, and $\mathrm{(\ref{assu:spl})}$. 
Assume that: 
\begin{itemize}
\item Hypothesis~$\spadesuit$ holds for (E,p,K),
\item $\overline{\rho}$ is surjective, 
\item $p$ is non-anomalous. 
\end{itemize} 
Then the Iwasawa--Greenberg main conjecture for $\mathscr{L}_{\mathfrak{p}}^{\bdp}$  holds.
\end{ThmB}

We conclude this subsection by noting another consequence of Theorem~A, which underlies the structure of its proof. 
As first observed in \cite{wan-heegner}, Perrin-Riou's Heegner point main conjecture implies a corresponding $p$-converse to the theorem of Gross--Zagier and Kolyvagin in the spirit of Skinner's work \cite{skinner-conv}: if $\mathrm{Sel}_{p^\infty}(E/K)$ has $\mathbf{Z}_p$-corank~one, then $\mathrm{ord}_{s=1}L(E/K,s)=1$. Indeed, the implication follows easily from Mazur's control theorem. In \cite{wei-zhang-mazur-tate}, this $p$-converse is deduced from the proof of Kolyvagin's conjecture in \emph{op.cit.} together with Kolyvagin's theorem \cite[Thm.~4]{kolyvagin-selmer} on the structure of $\mathrm{Sel}_{p^\infty}(E/K)$ (see \cite[Thm.~1.3]{wei-zhang-mazur-tate}). As a consequence of Theorem~A, the above $p$-converse can be deduced from W.~Zhang's proof of Kolyvagin's conjecture without the need to appeal to \cite{kolyvagin-selmer}. 

\subsection{Outline of the proofs}

As mentioned above, Howard's results towards Conjecture~\ref{conj:HPMC} were based on an adaptation to the anticyclotomic setting of the Kolyvagin system machinery of Mazur--Rubin \cite{mazur-rubin-book}, which provides upper bounds on the size of Selmer groups. As already observed by Kolyvagin \cite{kolyvagin-selmer}, the upper bound  provided by this machinery can be shown to be sharp under a certain non-vanishing hypothesis;  in the framework of \cite{mazur-rubin-book}, this corresponds to the Kolyvagin system being \emph{primitive}, see Definitions~4.5.5 and 5.3.9 in \cite{mazur-rubin-book}. 

Motivated by the ingenious Euler system argument introduced by  Bertolini--Darmon in \cite{bertolini-darmon-imc-2005}, Howard developed a theory of \emph{bipartite Euler systems} \cite{howard-bipartite}, which provides an alternative way to obtain upper bounds on Selmer groups without the need to apply Kolyvagin derivatives. Moreover, Howard also proved a criterion for his theory to yield a proof of the equality (rather than just one of the divisibilities) in a corresponding Iwasawa main conjecture. 

In a sense that will be made precise in $\S\ref{sec:bipartite}$, Howard's criterion for equality can be interpreted as the condition that the given bipartite Euler system is ``$\Lambda$-primitive''. On the other hand, digging into the proof of some of the main results in \cite{wei-zhang-mazur-tate}, we show that the constructions of Bertolini--Darmon \cite{bertolini-darmon-imc-2005} (as refined by Pollack--Weston \cite{pw-mu} and Chida--Hsieh \cite{chida-hsieh-main-conj}) yield a bipartite Euler system that is ``primitive''. Thus, by showing the implication 
\[
\textrm{primitivity$\quad\Longrightarrow\quad\Lambda$-primitivity} 
\]
for bipartite Euler systems, we arrive at the proof of Theorem~A. The proof of Theorem~B then follows from the equivalence between Conjectures~\ref{conj:HPMC} and \ref{conj:BDP} established in $\S\ref{sec:equiv-IMC}$.


\subsection{Acknowledgements} During the preparation of this work, F.C. was partially supported by the National Science Foundation through grants DMS-1801385 and DMS-1946136; C.K. was partially supported by a KIAS Individual Grant (SP054102) via the Center for Mathematical Challenges at Korea Institute for Advanced Study and by the Basic Science Research Program through the National Research Foundation of Korea (NRF-2018R1C1B6007009). It is a pleasure to thank Rob Pollack and Wei Zhang for their encouragement, Chris Skinner and Murilo Zanarella for several fruitful discussions, and the anonymous referee for a number of inquiries that led to significant improvements in the exposition of our results.

\section{Selmer groups}\label{sec:Sel}

Fix a prime $p>3$ and an embedding $\imath_p:\overline{\bQ}\hookrightarrow\overline{\bQ}_p$, where we let $\overline{\bQ}$ be the algebraic closure of $\mathbf{Q}$ in $\mathbf{C}$. Let $f=\sum_{n=1}^\infty a_nq^n\in S_2(\Gamma_0(N))$ be a newform with $p\nmid N$. Let $F=\mathbf{Q}(\{a_n\colon n\geqslant 1\})$ 
be the number field generated  by the Fourier coefficients of $f$, and let $\mathscr{O}$ be the ring of integers of $F$. We assume throughout that $f$ is ordinary at the prime $\pp$ of $\mathscr{O}$ above $p$ induced by $\imath_p$, i.e., $v_\pp(a_p)=0$.
 
Let $A_f$ be the $\mathrm{GL}_2$-type abelian variety over $\mathbf{Q}$ (unique up to isogeny) attached to $f$. Let $\mathscr{O}_\pp$ be the completion of $\mathscr{O}$ at $\pp$, and let
\[
T:=\varprojlim_j A_f[\pp^j] 
\]
be the $\pp$-adic Tate module of $A_f$, which is free of rank two over $\mathscr{O}_\pp$. Denote by $F_{\pp}$ the fraction field of $\mathscr{O}_\pp$, and set
\[
V:=T\otimes_{\mathscr{O}_\pp}F_\pp,\quad W:=V/T\simeq A_f[\pp^\infty].
\]

Let $K$ be an imaginary quadratic field of discriminant $D_K<0$ with $(D_K,N)=1$, and such that hypotheses (\ref{assu:disc}) and (\ref{assu:gen_heeg}) in the introduction hold. 
Let $K_\infty$ be the anticylotomic $\bZ_p$-extension of $K$, and let
\[ 
\Lambda=\mathscr{O}_\pp\llbracket \mathrm{Gal}(K_\infty/K )\rrbracket
\] 
be the anticylotomic Iwasawa algebra.

Fix a finite set $\Sigma$ of places of $K$ containing $\infty$ and the primes dividing $Np$, and let $K_{\Sigma}$ be the maximal extension of $K$ in $\overline{\mathbf{Q}}$ unramified outside $\Sigma$. Following \cite{mazur-rubin-book}, given a Selmer structure $\mathcal{F}$ on a $\mathrm{Gal}(K_\Sigma/K)$-module $M$, i.e., a collection of submodules $\mathrm{H}^1_{\mathcal{F}}(K_w, M)\subset\mathrm{H}^1(K_w, M)$ indexed by $w\in\Sigma$, we define the associated Selmer group 
by
\[
\mathrm{Sel}_{\mathcal{F}} (K, M) := \mathrm{ker} \biggl\{ \mathrm{H}^1(K_{\Sigma}/K, M) \rightarrow \prod_{w\in\Sigma} \dfrac{\mathrm{H}^1(K_w, M)}{\mathrm{H}^1_{\mathcal{F}}(K_w, M)} \biggr\}.
\]

\subsection{$p$-adic Selmer groups} 
Recall that if $M$ is a $G_K$-module and $L/K$ is a finite Galois extension, the induced representation 
\[
\mathrm{Ind}_{L/K}M := \lbrace f : G_K \to M : f(\sigma x) = f(x)^{\sigma} \textrm{ for all } x \in G_K, \sigma \in G_L \rbrace
\]
is equipped with commuting actions of $G_K$ and $\mathrm{Gal}(L/K)$. Consider the modules   
\begin{equation}\label{eq:TW}
{\displaystyle \mathbf{T} := \varprojlim_n \left( \mathrm{Ind}_{K_n/K}T\right)}, 
\quad
{\displaystyle \mathbf{W} := \varinjlim_n \left( \mathrm{Ind}_{K_n/K}W\right)} \simeq \mathrm{Hom}(\mathbf{T}, \mu_{p^\infty}), 
\end{equation}
where the limits are with respect to the corestriction and restriction maps, respectively,  and the isomorphism is given by the perfect $G_K$-equivariant pairing $\mathbf{T}\times\mathbf{W}\rightarrow\mu_{p^\infty}$ induced by the Weil pairing $T\times W\rightarrow\mu_{p^\infty}$ (see \cite[Prop.~2.2.4]{howard-kolyvagin}). Note that 
\[
\mathbf{T}\simeq T\otimes_{\mathscr{O}_\pp}\Lambda,
\]
where $G_K$ acts diagonally on the right-hand side, with the $G_K$-action on $\Lambda$ given by the inverse of the tautological character $G_K\twoheadrightarrow\mathrm{Gal}(K_\infty/K)\hookrightarrow\Lambda^\times$. We now describe certain Selmer groups for the modules $(\ref{eq:TW})$, whose $G_K$-action factors through $\mathrm{Gal}(K_\Sigma/K)$. 
 
Let $w$ be a prime of $K$ above $p$, and let $G_{K_w}\subset G_K$ be a decomposition group at $w$. Since $f$ is assumed to be ordinarity at $p$, there is a one-dimensional $G_{K_w}$-stable subspace $\mathrm{Fil}_w^+(V)\subset V$ such that the $G_{K_w}$-action on the quotient $V/\mathrm{Fil}_w^+V$ is unramified. Set
\[
\mathrm{Fil}_w^+(T) := T \cap \mathrm{Fil}_w^+(V),\quad
\mathrm{Fil}_w^+(W) :=  \mathrm{Fil}_w^+(V) / \mathrm{Fil}_w^+(T),
\]
and define the submodules $\mathrm{Fil}_w^+(\mathbf{T})\subset\mathbf{T}$ and $\mathrm{Fil}_w^+(\mathbf{W})\subset\mathbf{W}$ by
\[ 
\mathrm{Fil}_w^+(\mathbf{T}) := \varprojlim_n\left(\mathrm{Ind}_{K_n/K}\mathrm{Fil}_w^+(T)\right),\quad 
\mathrm{Fil}_w^+(\mathbf{W}) := \varinjlim_n\left(\mathrm{Ind}_{K_n/K}\mathrm{Fil}_w^+(W)\right).
\]
Following \cite[\S{3.2}]{howard-gl2-type}, we define the \emph{ordinary} Selmer structure $\mathcal{F}_{\textrm{ord}}$ on $\mathbf{T}$ by
\[
\mathrm{H}^1_{\mathcal{F}_{\textrm{ord}}}(K_w,\mathbf{T})=\left\{
\begin{array}{ll}
\mathrm{im}\{\mathrm{H}^1(K_w,\mathrm{Fil}_w^+(\mathbf{T}))\rightarrow\mathrm{H}^1(K_w,\mathbf{T})\}&\textrm{if $w\mid p$,}\\
\mathrm{H}^1(K_w,\mathbf{T})&\textrm{otherwise},
\end{array}
\right.
\]
and let $\mathrm{H}^1_{\mathcal{F}_{\mathrm{ord}}}(K_w,\mathbf{W})$ be the orthogonal complement of $\mathrm{H}^1_{\mathcal{F}_{\mathrm{ord}}}(K_w,\mathbf{T})$ under local Tate duality, so that
\[
\mathrm{H}^1_{\mathcal{F}_{\textrm{ord}}}(K_w,\mathbf{W})=
\begin{cases}
\mathrm{im}\{\mathrm{H}^1(K_w,\mathrm{Fil}_w^+(\mathbf{W}))\rightarrow\mathrm{H}^1(K_w,\mathbf{W})\}&\textrm{if $w\mid p$,}\\
0&\textrm{otherwise.}
\end{cases}
\]

We denote by $\mathrm{Sel}(K,\mathbf{T})$ and $\mathrm{Sel}(K,\mathbf{W})$ the Selmer groups defined by the Selmer structure $\mathcal{F}_{\mathrm{ord}}$. Shapiro's lemma gives canonical isomorphisms
\[
\mathrm{H}^1(K,\mathbf{T})\simeq\varprojlim_n\mathrm{H}^1(K_n,T),\quad
\mathrm{H}^1(K,\mathbf{W})\simeq\varinjlim_n\mathrm{H}^1(K_n,W),
\]
and as is well-known there are $\Lambda$-module pseudo-isomorphisms
\[
\mathrm{Sel}(K,\mathbf{T})\sim\varprojlim_n S_\pp(A_f/K_n),\quad
\mathrm{Sel}(K,\mathbf{W})\sim\varinjlim_n\mathrm{Sel}_{\pp^\infty}(A_f/K_n),
\]
where $S_\pp(A_f/L)$ and $\mathrm{Sel}_{\pp^\infty}(A_f/L)$ are the Selmer groups fitting into the exact sequences
\[
0\rightarrow A_f(L)\otimes\Phi_\pp/\mathscr{O}_\pp\rightarrow\mathrm{Sel}_{\pp^\infty}(A_f/L)\rightarrow\sha(A_f/L)[\pp^\infty]\rightarrow 0,
\]
\[
0\rightarrow A_f(L)\otimes\mathscr{O}_\pp\rightarrow S_\pp(A_f/L)\rightarrow\varprojlim_j\sha(A_f/L)[\pp^j]\rightarrow 0
\] 
(see e.g. \cite{coates-greenberg}).

\subsection{Residual Selmer groups}\label{subsec:residual}

Following \cite{bertolini-darmon-imc-2005}, we say that a prime $q$ is \emph{admissible} if it satisfies the following properties: 
\begin{itemize}
\item{} $q\nmid ND_Kp$, 
\item{} $q$ is inert in $K$, 
\item{} $q\not\equiv\pm{1}\pmod{p}$, 
\end{itemize}
and the ``admissibility index'' $M'(q):=v_\pp((q+1)^2-a_q^2)$ is strictly positive. 

We denote by $\mathcal{L}'$ the set of admissible primes and by $\mathcal{N}'$ the set of squarefree products of distinct primes $q\in\mathcal{L}'$.  
For $m\in\mathcal{N}'$ we define the admissibility index
\[
M'(m):=
\begin{cases}
\min\{M'(q)\colon q\mid m\}&\textrm{if $m>1$},\\
\infty&\textrm{if $m=1$,}
\end{cases}
\]
and say that $m$ is $j$-admissible if $M'(m)\geqslant j$. Let $\mathcal{N}'_j$ be the set of $j$-admissible integers $m\in\mathcal{N}'$, and let $\mathcal{N}'^{,\pm}$ (resp. $\mathcal{N}_j'^{,\pm}$) be the set of $m\in\mathcal{N}'$ (resp. $m\in\mathcal{N}_j'$) with $(-1)^{\nu(m)}=\pm{1}$, where $\nu(m)$ is the number of prime factors of $m$.

Given $j>0$ and $m\in\mathcal{N}_j'$ (which in our applications will be taken to be in $\mathcal{N}_j'^{,+}$), we now define ``$N^-m$-ordinary'' Selmer groups for the modules
\begin{equation}\label{eq:TW-j}
{\displaystyle \mathbf{T}_j := \varprojlim_n \mathrm{Ind}_{K_n/K}(T/\pp^jT)}, 
\quad
{\displaystyle \mathbf{W}_j := \varinjlim_n \mathrm{Ind}_{K_n/K}(A_f[\pp^j])}\nonumber
\end{equation}
(\emph{cf.} \cite[\S{3.1}]{howard-bipartite}, \cite[\S{1.2}]{chida-hsieh-main-conj}). Importantly, these Selmer groups will depend on $N^-m$ and the reduction of $f$ modulo $\pp^j$, but not on $f$ itself.

Let $w$ be a prime of $K$ above $p$, and define 
$\mathrm{Fil}^+_w(A_f[\pp^j])$ to be the kernel of the reduction map
\[
A_f[\pp^j]\rightarrow\tilde{A}_f[\pp^j],
\]
where $\tilde{A}_f$ is the reduction of $A_f$ modulo $w$. Set 
\[
\mathrm{Fil}_w^+(\mathbf{W}_j) = \varinjlim_n \mathrm{Ind}_{K_n/K}\mathrm{Fil}_w^+(A_f[\pp^j])
\] 
and define the \emph{ordinary} condition $\mathrm{H}^1_{\mathrm{ord}}(K_w,\mathbf{W}_j)\subset\mathrm{H}^1(K_w,\mathbf{W}_j)$ by
\begin{equation}\label{eq:ord}
\mathrm{H}^1_{\mathrm{ord}}(K_w,\mathbf{W}_j):=\mathrm{im}\bigl\{\mathrm{H}^1(K_w,\mathrm{Fil}_w^+(\mathbf{W}_j))\rightarrow\mathrm{H}^1(K_w,\mathbf{W}_j)\bigr\}.
\end{equation}

Next let $w$ be a prime of $K$ above a prime $\ell\mid N^-m$. If $\ell\mid N^-$, then $A_f$ acquires purely toric reduction over $K_w=\mathbf{Q}_{\ell^2}$, and by the theory of $\ell$-adic uniformization of Tate and Morikawa there is a unique rank one $\mathscr{O}_\pp$-submodule $\mathrm{Fil}_w^+(T)\subset T$ on which $G_{K_w}$ acts by the cyclotomic character. Letting $\mathrm{Fil}_w^+(A_f[\pp^j])$ be the natural image of $\mathrm{Fil}_w^+(T)$ in  $A_f[\pp^j]$, we define the ordinary condition at $w$ as in $(\ref{eq:ord})$. On the other hand, if $\ell\mid m$, then the Galois module 
$A_f[\pp^j]$ is unramified at $w$ and the action of a Frobenius element at $w$ is semi-simple, yielding a decomposition 
\begin{equation}\label{eq:dec}
A_f[\pp^j]\simeq(\mathscr{O}/\pp^j)(1)\oplus(\mathscr{O}/\pp^j)
\end{equation}
as $G_{K_w}$-modules. Letting $\mathrm{Fil}^+_{w}(A_{f}[\pp^j])\subset A_f[\pp^j]$ be the direct summand corresponding to the first factor    in the decomposition $(\ref{eq:dec})$, we define the ordinary submodule $\mathrm{H}^1_{\mathrm{ord}}(K_w,\mathbf{W}_j)\subset\mathrm{H}^1(K_w,\mathbf{W}_j)$ by the same recipe $(\ref{eq:ord})$.

Following \cite[Def.~2.8]{bertolini-darmon-imc-2005}, we define the ``$N^-m$-ordinary'' Selmer group $\mathrm{Sel}_{N^-m}(K,\mathbf{W}_j)$ to be the Selmer group defined by
\begin{itemize} 
\item the ordinary local condition $\mathrm{H}^1_{\mathrm{ord}}(K_w,\mathbf{W}_j)$ at the primes $w\mid pN^-m$, 
\item the unramified local condition 
\[
\mathrm{H}^1_{\mathrm{unr}}(K_w,\mathbf{W}_j):=\mathrm{ker}\bigr\{\mathrm{H}^1(K_w,\mathbf{W}_j)\rightarrow\mathrm{H}^1(K_w^{\mathrm{unr}},\mathbf{W}_j)\bigr\}
\] 
at all the other primes. 
\end{itemize}

Since $T/\pp^jT\simeq A_f[\pp^j]$, the ordinary submodules $\mathrm{H}^1_{\mathrm{ord}}(K_w,\mathbf{T}_j)\subset\mathrm{H}^1(K_w,\mathbf{T}_j)$ for $w\mid pN^-m$, and the corresponding Selmer group $\mathrm{Sel}_{N^-m}(K,\mathbf{T}_j)$ can be defined in the same manner.

In the following, abusing notation, given $q\in\mathcal{L}'$ we shall denote by $K_q$ the completion of $K$ at the unique prime above $q$.

\begin{lem}\label{lem:rank1}
For any $q\in\mathcal{L}_j$, the modules
\[
\mathrm{H}^1_{\mathrm{ord}}(K_q,\mathbf{T}_j),\quad\mathrm{H}^1_{\mathrm{unr}}(K_q,\mathbf{T}_j)
\]
are free of rank one over $\Lambda/\pp^j\Lambda$.
\end{lem}

\begin{proof}
Since the primes $q\in\mathcal{L}_j$ are inert in $K$, they split completely in $K_\infty/K$, and so Shapiro's lemma gives an isomorphism
\[
\mathrm{H}^1(K_q,\mathbf{T}_j)\simeq\varprojlim_n\bigoplus_{w\vert q}\mathrm{H}^1(K_{n,w},A_f[\pp^j])\simeq\mathrm{H}^1(K_q,A_f[\pp^j])\otimes\Lambda,
\]
where $w$ runs over the primes of $K_n$ above $q$. By \cite[Lem.~2.2.1]{howard-bipartite}, the result follows.
\end{proof}

The next result compares the $p$-adic Selmer groups 
for the ordinary Selmer structure $\mathcal{F}_{\mathrm{ord}}$ defined above and the corresponding $N^-$-ordinary Selmer groups.

\begin{lem}\label{lem:ord-min}
Assume that $A_f[\pp]$ is irreducible as a $G_\mathbf{Q}$-module. Then 
\[
\mathrm{Sel}(K,\mathbf{W})\simeq\varinjlim_j\mathrm{Sel}_{N^-}(K,\mathbf{W}_j),\quad
\mathrm{Sel}(K,\mathbf{T})\simeq\varprojlim_j\mathrm{Sel}_{N^-}(K,\mathbf{T}_j).
\]
\end{lem}

\begin{proof}
The second identification follows immediately from the first. For the latter, as shown in the proof of \cite[Prop.~3.6]{pw-mu}, $\varinjlim_j\mathrm{Sel}_{N^-}(K,\mathbf{W}_j)$ is contained in $\mathrm{Sel}(K,\mathbf{W})$ with finite index. Since by \cite[Prop.~3.12]{hatley-lei} the module $\mathrm{Sel}(K,\mathbf{W})$ has no proper finite index submodules (note that this result does not require this module to be $\Lambda$-torsion), the first isomorphism follows. 
\end{proof}

\section{Proof of Theorem~A}\label{sec:bipartite}

We keep the setting and notations introduced in Section~\ref{sec:Sel}. Let $\mathbf{F}=\mathscr{O}/\pp$ be the residue field of $\pp$, and let 
\[
\overline{\rho}:G_\bQ\rightarrow\mathrm{Aut}_{\mathbf{F}}(A_f[\pp])\simeq\mathrm{GL}_2(\mathbf{F})
\] 
be the Galois representation on the $\pp$-torsion of $A_f$. Let $\mathscr{O}_0\subset\mathscr{O}$ be the order generated over $\bZ$ by the Fourier coefficients of $f$, and set $\wp_0:=\wp\cap\mathscr{O}$ and $\mathbf{F}_0:=\mathscr{O}_0/\wp_0$.  
Note that $\overline{\rho}$ arises as the extension of scalars of a representation $\overline{\rho}_0$ defined over $\mathbf{F}_0$.

As in \cite{wei-zhang-mazur-tate}, we consider the following conditions on the triple $(f,\pp,K)$.

\begin{assumption-heart}
\label{assu:heart}
Let $\mathrm{Ram}(\overline{\rho})$ denote the set of primes $\ell\Vert N$ such that the $G_{\mathbf{Q}}$-module 
$A_f[\wp]$ is ramified at $\ell$. Then:
	\begin{itemize}
		\item[(i)] $\mathrm{Ram}(\overline{\rho})$ contains all primes $\ell\Vert N^+$. 
		\item[(ii)] $\mathrm{Ram}(\overline{\rho})$ contains all primes  $\ell\vert N^-$ with $\ell\equiv\pm 1 \pmod{p}$. 
		\item[(iii)] If $N$ is not squarefree, then either $\mathrm{Ram}(\overline{\rho})$ contains a prime $\ell\vert N^-$ or there are at least two primes $\ell\Vert N^+$.
		\item[(iv)] For all primes $\ell$ such that $\ell^2\vert N^+$ we have $\mathrm{H}^1(\mathbf{Q}_\ell,A_f[\wp])=\mathrm{H}^0(\mathbf{Q}_\ell,A_f[\wp])=\{0\}$.
	\end{itemize}
\end{assumption-heart}

\begin{rem}\label{rem:5.1}
When $\mathscr{O}=\mathbf{Z}$, i.e., for $f$ corresponding to an elliptic curve $E/\mathbf{Q}$,  Hypothesis~$\heartsuit$ for $(E,p,K)$ reduces to Hypothesis~$\spadesuit$ in the introduction. See \cite[Lem.~5.1(2)]{wei-zhang-mazur-tate}.
\end{rem}

Following Mazur's terminology in \cite{mazur-IMC-for-E}, we say that $p$ is \emph{non-anomalous} if 
\[
\begin{cases}
a_p\not\equiv 1\;(\mathrm{mod}\;\pp)&\textrm{if $p$ splits in $K$,}\\
a_p^2\not\equiv 1\;(\mathrm{mod}\;\pp)&\textrm{if $p$ is inert in $K$.}
\end{cases}
\]
In this section we prove the following result, which in the case where $f$ has rational Fourier coefficients recovers Theorem~A in the introduction. Let
\[
\mathcal{S}=\mathrm{Sel}(K,\mathbf{T}),\quad\mathcal{X}=\mathrm{Sel}(K,\mathbf{W})^\vee,
\]
where $M^\vee=\mathrm{Hom}_{\mathbf{Z}_p}(M,\mathbf{Q}_p/\mathbf{Z}_p)$ denotes the Pontryagin dual of a module $M$.

\begin{thm}\label{thm:main}
Suppose $p\nmid 6N$ and $\pp$ is a prime of $\mathscr{O}$ above $p$ such that the following hold: 
\begin{itemize}
\item $f$ is ordinary at $\pp$,
\item Hypothesis~$\heartsuit$ holds for $(f,\pp,K)$,
\item $\overline{\rho}_0$ is surjective,
\item $p$ is non-anomalous. 
\end{itemize}
Then both $\mathcal{S}$ and $\mathcal{X}$ have $\Lambda$-rank one, and
\[
\mathrm{Char}_\Lambda(\mathcal{X}_{\mathrm{tors}})=\mathrm{Char}_\Lambda\bigl(\mathcal{S}/\Lambda\kappa_\infty\bigr)^2,
\]
where $\mathcal{X}_{\mathrm{tors}}$ denotes the $\Lambda$-torsion submodule of $\mathcal{X}$.
\end{thm} 

The proof of Theorem~\ref{thm:main} will occupy the rest of this section. Our argument is based on Howard's theory of bipartite Euler systems, 
with some ideas and results from Wei~Zhang's proof of Kolyvagin's conjecture \cite{wei-zhang-mazur-tate}. In the terminology of \emph{loc.cit.}, we crucially exploit the ``$m$-aspect'' of the system of Heegner classes given by level-raising at admissible primes, rather than the ``$n$-aspect'' given by tame derivatives at Kolyvagin primes.

As in \cite{pw-mu}, we say that the pair $(\overline{\rho},N^-)$ satisfies \emph{Condition~CR} if the following hold: 

\begin{itemize}
\item[(i)] $\overline{\rho}$ is ramified at every prime $\ell\vert N^-$ with $\ell\equiv\pm{1}\pmod{p}$,
\item[(ii)] $\overline{\rho}_0$ is surjective.
\end{itemize}

The basic construction for our argument is provided by the following result coming from the work Bertolini--Darmon \cite{bertolini-darmon-imc-2005}, and its refinements by Pollack--Weston \cite{pw-mu} and Chida--Hsieh \cite{chida-hsieh-main-conj}. (Note that the condition that $p$ is non-anomalous made in the aforementioned references---see also  \cite[Rem.~1.4]{KPW}---is no longer necessary thanks to recent advances on Ihara's lemma, \cite{manning-shotton}.)


\begin{thm}
\label{thm:bipartite}
Suppose $(\overline{\rho},N^-)$ satisfies Condition~CR. Then for every $j>0$ there is a pair of systems
\begin{align*}
\boldsymbol{\kappa}&=\{\kappa_j(m)\in\mathrm{Sel}_{N^-m}(K,\mathbf{T}_j)\colon m\in\mathcal{N}_j'^{,+}\},
\\
\boldsymbol{\lambda}&=\{\lambda_j(m)\in\Lambda/\pp^{j}\Lambda \colon m\in\mathcal{N}_j'^{,-}\},
\end{align*}
related by a system of ``explicit reciprocity laws'': 
\begin{itemize}
\item{} If $mq_1q_2\in\mathcal{N}_{j}'^{,+}$ with $q_1, q_2\in\mathcal{L}_j'$ distinct primes, then
\begin{equation}\label{ERL1}
\mathrm{loc}_{q_2}(\kappa_j(mq_1q_2))=\lambda_j(mq_1)\tag{1st}
\end{equation}
under a fixed isomorphism $\mathrm{H}_{\mathrm{ord}}^1(K_{q_1},\mathbf{T}_j)\simeq\Lambda/\pp^j\Lambda$ (see Lemma~\ref{lem:rank1});
\item{} If $mq\in\mathcal{N}_{j}'^{,-}$ with $q\in\mathcal{L}_j'$ prime, then
\begin{equation}\label{ERL2}
\mathrm{loc}_q(\kappa_j(m))=\lambda_j(mq)\tag{2nd}
\end{equation}
under a fixed isomorphisms $\mathrm{H}_{\mathrm{unr}}^1(K_{q},\mathbf{T}_j)\simeq\Lambda/\pp^j\Lambda$.
\end{itemize}
\end{thm}

\begin{proof}
We recall the construction of the systems $\boldsymbol{\kappa}$ and $\boldsymbol{\lambda}$, following the treatment in \cite{chida-hsieh-main-conj} with some modifications.   
Denote by $\alpha_p$ the $\pp$-adic unit root of $x^2-a_px+p$, and let $f_\alpha\in S_2(\Gamma_0(Np))$ be the $p$-stabilization of $f$ with $U_p$-eigenvalue $\alpha_p$. 
Fix $m\in\mathcal{N}'^{,+}$, let $B_m$ be the indefinite quaternion algebra over $\mathbf{Q}$ of discriminant $N^-m$, and consider the compact Shimura curve 
\[
X_m:=X_{pN^+,N^-m}
\] 
attached to an Eichler order $R_m\subset B_m$ of level $pN^+$ as defined in \cite[\S{4.2}]{jetchev-skinner-wan}. In particular, $X_{m}=X_0(Np)$ when $N^-m=1$). The curve $X_m$ has a canonical model over $\mathbf{Q}$, and its complex uniformization is given by
\begin{equation}\label{eq:complex-unif}
X_{m}(\mathbf{C})=B_m^\times\backslash\bigl(\mathfrak{H}^{\pm}\times\widehat{B}_m^\times/\widehat{R}_m^\times\bigr)\cup\{\textrm{cusps}\},
\quad\mathfrak{H}^\pm:=\mathbf{C}\smallsetminus\mathbf{R},
\end{equation}
where $\widehat{B}_m:=B_m\otimes_{\mathbf{Z}}\widehat{\mathbf{Z}}$ and $\widehat{R}_m:=R_m\otimes_{\mathbf{Z}}\widehat{\mathbf{Z}}$ are the profinite completions of $B_m$ and $R_m$. Fix an (optimal) embedding $\iota_K:K\hookrightarrow B_m$ such that 
\[
\iota_K(K)\cap R=\iota_K(\mathcal{O}_K),
\] 
where $\mathcal{O}_K$ is the ring of integers of $K$. In terms of the complex uniformization (\ref{eq:complex-unif}), the collection of \emph{Heegner points} on $X_{m}$ is defined as
\begin{equation}\label{eq:CM}
\mathrm{CM}(X_{m}):=\{[h,b]\in X_{m}(\mathbf{C})\colon b\in\widehat{B}_m^\times\}\simeq K^\times\backslash\widehat{B}_m^\times/\widehat{R}_m^\times,
\end{equation}
where $h$ is the unique fixed point of $\imath_K(K^\times)$ on $\mathfrak{H}^+$. We shall define Heegner points by specifying a representative element in $\widehat{B}_m^\times$ under the identification (\ref{eq:CM}). 

Let $J(X_{m})=\mathrm{Pic}^0(X_m)_{/\mathbf{Q}}$ be the Picard variety of $X_{m}$, and let $g_m$  be a mod $\pp^j$ level-raising of $f_\alpha$ of level $pNm$. The existence of $g_m$ follows from \cite[Thm.~4.3]{chida-hsieh-main-conj} (\emph{cf.} \cite[Thm.~5.15]{bertolini-darmon-imc-2005}), 
which also implies its uniqueness up to a $\pp$-adic unit. Moreover, letting $\mathbb{T}_m$ be the algebra of Hecke correspondences on $X_{m}$, by \cite[Cor.~4.4]{chida-hsieh-main-conj} there is an isomorphism
\begin{equation}\label{eq:4.4}
(\mathrm{Ta}_p(J(X_m))\otimes_{\mathbf{Z}_p}\mathscr{O}_\pp)/\mathcal{I}_{g_m}\simeq T/\pp^jT,
\end{equation}
where $\mathcal{I}_{g_m}$ is the kernel of the algebra homomorphism $\lambda_{g_m}:\mathbb{T}_{m}\rightarrow\mathscr{O}/\pp^j$ defined by $g_m$.

For each positive integer $n$, let $H_{p^n}$ be the ring class field of $K$ of conductor $p^n$, and let
\begin{equation}\label{eq:CM}
x_m(p^n)\in\mathrm{CM}(X_m)\cap X_m(H_{p^n})
\end{equation}
be the Heegner point defined by the element $\varsigma^{(n)}\in\widehat{B}^\times$ in \cite[(4.6)]{chida-hsieh-main-conj}. Choose an auxiliary prime $\ell_0$ such that $a_{\ell_0}-\ell_0-1\not\in\pp$ (note that the existence of such $\ell_0$ is guaranteed by the irreducibility of $\overline{\rho}$), and consider the map
\begin{equation}\label{eq:emb}
\begin{split}
\iota_{m}:X_{m}(H_{p^n})&\rightarrow J(X_{m})(H_{p^n})\otimes_{\mathbf{Z}}\mathscr{O}_\pp\\ 
x&\mapsto (T_{\ell_0}-\ell_0-1)[x]\otimes(a_{\ell_0}-\ell_0-1)^{-1},
\end{split}
\end{equation}
where $T_{\ell_0}$ is the $\ell_0$-th Hecke correspondence of $X_{m}$.  Let
\[
\mathrm{Kum}:J(X_m)(H_{p^n})\otimes_{\mathbf{Z}}\mathscr{O}_\pp\rightarrow\mathrm{H}^1(H_{p^n},\mathrm{Ta}_p(J(X_m))\otimes_{\mathbf{Z}_p}\mathscr{O}_\pp)
\]
be the Kummer map, and set $K_n:=H_{p^n}\cap K_\infty$. A standard calculation (see \cite[Lem.~4.6]{chida-hsieh-main-conj} and the reference \cite[Prop.~4.8]{longo-vigni-manuscripta} therein) shows that the classes defined by 
\[
\alpha_p^{-n}\sum_{\sigma\in\mathrm{Gal}(H_{p^n}/K_n)}\mathrm{Kum}(\iota_m(x_m(p^n))^\sigma)\;(\textrm{mod}\;\mathcal{I}_{g_m})
\]  
are compatible under corestriction, and hence under the isomorphism $(\ref{eq:4.4})$ they define a class
\[
\kappa_j(m)\in\varprojlim_n\mathrm{H}^1(K_n,T/\pp^jT)\simeq\mathrm{H}^1(K,\mathbf{T}_j),
\]
which by \cite[Prop.~4.7]{chida-hsieh-main-conj} lands in $\mathrm{Sel}_{N^-m}(K,\mathbf{T}_j)\subset\mathrm{H}^1(K,\mathbf{T}_j)$.

Now let $g_{mq}$ be a mod $\pp^j$ level-raising of $f_\alpha$ to level $pNmq$, viewed as an automorphic form on the Shimura set
\[
X_{mq}=B_{mq}^\times\backslash\widehat{B}_{mq}^\times/\widehat{R}^\times_{mq}
\]
attached to the definite quaternion algebra $B_{mq}$ over $\mathbf{Q}$ of discriminant $N^-mq$ with the Eichler order $R_{mq}\subset B_{mq}$ of level $pN^+$. 
Let $\mathcal{O}_{p^n}=\mathbf{Z}+p^n\mathcal{O}_K$ be the order of $K$ of conductor $p^n$, so that $\mathrm{Pic}(\mathcal{O}_{p^n})\simeq\mathrm{Gal}(H_{p^n}/K)$ under the reciprocity map $\mathrm{rec}_K:K^\times\backslash\widehat{K}^\times\rightarrow G_K^{\mathrm{ab}}$.  For a fixed embedding $K\hookrightarrow B_{mq}$, define the map
\[
x_{mp}(p^n):\mathrm{Pic}(\mathcal{O}_{p^n})=K^\times\backslash\widehat{K}^\times/\widehat{\mathcal{O}}_{p^n}^\times\rightarrow X_{mq}
\]
sending $K^\times a\widehat{\mathcal{O}}_{p^n}\mapsto[a\varsigma^{(n)}]$. Using that the mod $\pp^j$ eigenform $g_{mp}$ is a $U_p$-eigenvector with eigenvalue $\alpha_p$, one checks immediately that the natural image in $(\mathscr{O}/\pp^j)[\mathrm{Gal}(K_n/K)]$ of the element 
\[
\alpha_p^{-n}\sum_{\sigma\in\mathrm{Gal}(H_{p^n}/K)}g_{mp}(x_{mq}(p^n)(a))[\sigma]\in(\mathscr{O}/\pp^j)[\mathrm{Gal}(H_{p^n}/K)],
\]
where $\sigma=\mathrm{rec}_K(a)$, are compatible under the projections 
\[
(\mathscr{O}/\pp^j)[\mathrm{Gal}(K_{n}/K)]\rightarrow(\mathscr{O}/\pp^j)[\mathrm{Gal}(K_{n-1}/K)],
\]
hence defining an element
\[
\lambda_j(m)\in\varprojlim_n(\mathscr{O}/\pp^j)[\mathrm{Gal}(K_n/K)]\simeq\Lambda/\pp^j\Lambda.
\]

This defines the systems $\boldsymbol{\kappa}$ and $\boldsymbol{\lambda}$, and with these, equalities (\ref{ERL1}) and (\ref{ERL2}) in the statement of the Theorem are a reformulation of the first and second explicit reciprocity laws in \cite[Thm.~5.1]{chida-hsieh-main-conj} and \cite[Thm.~5.5]{chida-hsieh-main-conj}, respectively.
\end{proof}

In the terminology of \cite{howard-bipartite}, the pair $(\boldsymbol{\kappa},\boldsymbol{\lambda})$ defines a \emph{bipartite Euler system} (of odd type) for the triple $(A_f[\pp^j],\mathcal{F},\mathcal{L}'_j)$, where $\mathcal{F}$ is the Selmer structure defining the $N^-$-ordinary Selmer groups $\mathrm{Sel}_{N^-}(K,\mathbf{W}_j)$ and $\mathrm{Sel}_{N^-}(K,\mathbf{T}_j)$. Letting $\mathscr{X}$ be the graph with vertices $v=v(m)$ indexed by $m\in\mathcal{N}_j'$ and edges connecting $v(m)$ to $v(mq)$ whenever $q\in\mathcal{L}_j'$ and $mq\in\mathcal{N}_j'$, we shall use the interpretation of such systems as global sections of the  sheaf $\mathrm{ES}(\mathscr{X})$ on $\mathscr{X}$ introduced in \cite[\S{2.4}]{howard-bipartite}.

For varying $j$ the elements $\kappa_j(m)$ and $\lambda_j(m)$ are compatible under the natural maps 
\[
A_f[\pp^{j+1}]\rightarrow A_f[\pp^j],\quad
\Lambda/\pp^{j+1}\Lambda\rightarrow\Lambda/\pp^j\Lambda.
\]
Taking $m=1$ we thus obtain a distinguished element 
\begin{equation}\label{eq:dist}
\kappa_\infty:=\varprojlim_j\kappa_j(1)\in
\varprojlim_j\mathrm{Sel}_{N^-}(K,\mathbf{T}_j)
\end{equation}
using by the isomorphism in Lemma~\ref{lem:ord-min} we shall view in $\mathcal{S}=\mathrm{Sel}_{}(K,\mathbf{T})$.

We will prove Theorem~\ref{thm:main} by an application of the following result of Howard.

\begin{thm}[Howard]\label{thm:howard-bipartite}
Assume that the pair $(\overline{\rho},N^-)$ satisfies Condition~CR. Then both $\mathcal{S}$ and $\mathcal{X}$ have $\Lambda$-rank one, and  the following divisibility holds in $\Lambda$:
\begin{equation}\label{eq:div}
\mathrm{Char}_\Lambda(\mathcal{X}_{\mathrm{tors}})\supset\mathrm{Char}_\Lambda\bigl(\mathcal{S}/\Lambda\kappa_\infty\bigr)^2.
\end{equation}
Moreover, the divisibility in $(\ref{eq:div})$ is an equality if the following condition is satisfied: For any height one prime $\mathfrak{P}\subset\Lambda$, there exists $k=k(\mathfrak{P})$  such that for all $j\geqslant k$ the set
\begin{equation}\label{eq:prim}
\{\lambda_j(m)\in\Lambda/\pp^{j}\Lambda\;\colon\; m\in\mathcal{N}_j'^{,-}\}\nonumber
\end{equation}
contains an element with non-trivial image in $\Lambda/(\mathfrak{P},\pp^k)$.
\end{thm}

\begin{proof}
The element $\kappa_\infty$ is nonzero by the work of Cornut--Vatsal \cite{cornut-vatsal}, so the result follows from Lemma~\ref{lem:ord-min} and \cite[Thm.~3.2.3]{howard-bipartite}.
\end{proof}

In the following lemma, let $(R,\mathfrak{m}_R)$ be a principal Artinian local ring, let $T$ be a free $R$-module of rank $2$ equipped with a continuous action of $G_K$ as in \cite[\S{2.6}]{howard-bipartite}, let $\mathcal{F}$ be a Selmer structure on $T$,  
and let $\mathcal{L}'$ be a set of (admissible) primes of $K$ 
such that $(T,\mathcal{F},\mathcal{L}')$ satisfies Hypotheses~2.2.4 and 2.3.1 of \cite{howard-bipartite}. We refer the reader to \cite[Def.~2.2.8]{howard-bipartite} for the definition of the \emph{stub module} 
\[
\mathrm{Stub}(v)=\mathrm{Stub}_m\subset R
\] 
associated with the vertex $v$ of $\mathscr{X}$ indexed by $m$, and (as in [\emph{loc.cit.}, Def.~2.4.2]) say that $v$ is a \emph{core vertex} if $\mathrm{Stub}(v)=R$.


\begin{lem}\label{lem:core}
Let $s$ be the global section of $\mathrm{ES}(\mathscr{X})$ corresponding to a bipartite Euler system over $R$. Then there exists a constant $\delta=\delta(s)$ with 
$0\leqslant\delta\leqslant\mathrm{length}(R)$ such that $s(v)$ generates
$\mathfrak{m}_R^\delta\cdot\mathrm{Stub}(v)$ for every core vertex $v$ of $\mathscr{X}$. Moreover, $s$ is uniquely determined by its value at any core vertex.
\end{lem}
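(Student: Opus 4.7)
The plan is to interpret $s(v)$ at each core vertex as an element of $R$ via a chosen isomorphism $\mathrm{Stub}(v)\simeq R$, define $\delta(v)\in\{0,1,\ldots,\mathrm{length}(R)\}$ as the unique integer with $s(v)\cdot R=\mathfrak{m}^{\delta(v)}$ (setting $\delta(v)=\mathrm{length}(R)$ if $s(v)=0$), and then prove $\delta(v)$ is independent of $v$. This is well-defined and independent of the choice of isomorphism because $R$ is a principal Artinian local ring, so every ideal is a power of $\mathfrak{m}$.

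The heart of the argument is to show that $\delta(v)=\delta(v')$ whenever $v$ and $v'$ are two core vertices of $\mathscr{X}$. It suffices to treat the case where $v$ and $v'$ are both adjacent to a common (necessarily non-core) vertex $w$, and then connect arbitrary pairs of core vertices by a chain of such ``next-nearest neighbor'' steps. At $w$, the non-triviality of $\mathrm{Stub}(w)$ controls the mismatch between the two edge-localization maps incident to $w$; the axioms for a global section of $\mathrm{ES}(\mathscr{X})$ (equivalently, the first and second reciprocity laws that define a bipartite Euler system) force the localizations of $s(v)$ and $s(v')$ at the common vertex to coincide up to the same unit ambiguity built into $\mathrm{Stub}(w)$. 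Comparing divisibilities on the two sides yields $\delta(v)=\delta(v')$. The connectivity of the core-vertex subgraph of $\mathscr{X}$ under the next-nearest neighbor relation is a combinatorial consequence of the structure of $\mathcal{N}'$ under multiplication by admissible primes, together with the characterization of core vertices in terms of the parity and dimension of the associated Selmer groups.

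For the uniqueness claim, suppose $s_1$ and $s_2$ are global sections of $\mathrm{ES}(\mathscr{X})$ with $s_1(v_0)=s_2(v_0)$ at a core vertex $v_0$. Linearity of the sheaf of bipartite Euler systems makes $s:=s_1-s_2$ again a global section, and by construction $\delta(s)(v_0)=\mathrm{length}(R)$, i.e.\ $s(v_0)=0$. By the first part of the lemma applied to $s$, we obtain $s(v)=0$ at every core vertex $v$. The remaining step is to propagate vanishing from core to non-core vertices: at any non-core $w$, the value $s(w)$ is pinned down by its localizations along the edges incident to $w$, and by walking from $w$ along a path in $\mathscr{X}$ ending at a core vertex (which exists because every vertex of $\mathscr{X}$ is connected to a core vertex via admissible moves) and iterating the edge-restriction compatibilities, one concludes $s(w)=0$ as well.

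The principal obstacle will be the constancy of $\delta$ in the second paragraph: the delicate point is to verify that the combinatorics of core versus non-core vertices, together with the precise form of the edge restriction maps in $\mathrm{ES}(\mathscr{X})$, really do force the observed divisibility to transport across a common non-core neighbor without any loss. This is a formal but intricate book-keeping argument in the spirit of \cite{mazur-rubin-book} adapted to the bipartite setting by Howard; once it is in place, the uniqueness statement follows mechanically from $R$-linearity.
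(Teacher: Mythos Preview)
The paper's own ``proof'' is a one-line citation to \cite[Cor.~2.4.12]{howard-bipartite}; no argument is given. Your sketch is essentially a reconstruction of Howard's argument there, so in spirit you are following the same route as the paper (by way of the cited reference), just with the details filled in.

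One correction to your outline: the parenthetical ``(necessarily non-core)'' is wrong. In the bipartite setup, adjacent vertices have Selmer ranks differing by exactly one, so a neighbor of a core vertex of one parity can very well be a core vertex of the opposite parity (rank $0$ definite adjacent to rank $1$ indefinite). Howard's actual argument does not route through non-core intermediaries; rather, one shows directly that for \emph{adjacent} core vertices $v,w$ the edge map in $\mathrm{ES}(\mathscr{X})$ identifies $s(v)$ and $s(w)$ up to a unit (because at a core vertex the relevant localization is an isomorphism onto $\mathrm{Stub}\simeq R$), whence $\delta(v)=\delta(w)$. Constancy then follows from connectivity of the subgraph of core vertices, which Howard establishes separately. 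Your ``next-nearest neighbor through a non-core $w$'' picture would not by itself give the comparison, since at a non-core $w$ the stub module is $\mathfrak{m}^e R$ with $e>0$ and the localization need not be injective, so divisibility information can be lost in transit. Once you replace that step with the adjacent-core-vertex comparison, the rest of your outline (including the uniqueness argument via linearity and propagation from core vertices) matches Howard's proof.
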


\begin{proof}
This is shown in the proof of \cite[Cor.~2.4.12]{howard-bipartite}.
\end{proof}

Now we return to our setting.

\begin{lem}\label{prop:prim}
Suppose $(\overline{\rho},N^-)$ satisfies Condition~CR. If the system $\boldsymbol{\lambda}$ has nonzero image in $\Lambda/\pp\Lambda$, then the criterion for equality 
in Theorem~\ref{thm:howard-bipartite} holds.
\end{lem}

\begin{proof}
Unless indicated otherwise, all the references in this proof are to \cite{howard-bipartite}. 
Denote by $\overline{\boldsymbol{\lambda}}$ the image of $\boldsymbol{\lambda}$ in $\Lambda/\pp\Lambda$. Denoting by $\overline{\boldsymbol{\kappa}}$ the reduction of $\boldsymbol{\kappa}$ modulo $\pp$, the pair $(\overline{\boldsymbol{\lambda}},\overline{\boldsymbol{\kappa}})$ defines a bipartite Euler system over $\mathbf{F}$; or equivalently, a global section $s$ of the corresponding Euler system sheaf $\mathrm{ES}(\mathscr{X})$.  
Since $\overline{\boldsymbol{\lambda}}\neq 0$ and by Corollary~2.4.9 there are core vertices corresponding to $m\in\mathcal{N}_j'^{,-}$ for any $j$, by Lemma~\ref{lem:core} above (noting that Hypothesis~2.2.4 holds by our running hypotheses, and Hypothesis~2.3.1 holds by \cite[Thm.~3.2]{bertolini-darmon-imc-2005}) it follows that $s(v)\neq 0$ for any core vertex of $\mathscr{X}$. Since $\mathbf{F}$ has length one, this shows that $\delta=0$ in Lemma~\ref{lem:core} above. Thus we conclude that for any $j>0$ the system
\[
\{\lambda_j(m)\in\Lambda/\pp^{j}\Lambda\;\colon\; m\in\mathcal{N}_j'^{,-}\}
\]
has nonzero image in $\Lambda/\pp\Lambda$, and so for any height one prime $\mathfrak{P}\subset\Lambda$ the criterion in Theorem~\ref{thm:howard-bipartite} is satisfied by taking $k=k(\mathfrak{P})=1$. 
\end{proof}

\begin{prop}\label{prop:wei}
Suppose the following hold:
\begin{itemize}
\item $(f,\wp,K)$ satisfies Hypothesis~$\heartsuit$, 
\item $\overline{\rho}_0$ is surjective, 
\item $p$ is non-anomalous. 
\end{itemize}
Then the system $\boldsymbol{\lambda}$ has nonzero image in $\Lambda/\pp\Lambda$.
\end{prop}

\begin{proof}
All the references in this proof are to \cite{wei-zhang-mazur-tate}. 
Let $\mathrm{Sel}_{\wp}(A_f/K)\subset\mathrm{H}^1(K,A_f[\wp])$ be the usual $\wp$-Selmer group, and set
\[
r=\mathrm{dim}_{\mathbf{F}}\mathrm{Sel}_\wp(A_f/K).
\] 
We need to show that for some $g$ obtained by level-raising $f$ at $m\in\mathcal{N}'^{,-}$, 
the $p$-adic $L$-function attached to $g$ over $K$ (as constructed in \cite{chida-hsieh-p-adic-L-functions} using the period denoted by $\Omega_g^{\mathrm{can}}\cdot\eta_{g,N^+,N^-m}$ in the notations of \cite[\S{6.2}]{wei-zhang-mazur-tate}, i.e., Gross's period) is invertible.  We will show this by induction on $r$. 

Since $K$ satisfies hypothesis (\ref{assu:gen_heeg}), as in Theorem~9.1 we may assume that $r$ odd.  
If $r=1$, the existence of $g$ is shown in Theorem~7.2 (where it is denoted by $g'$). Indeed, $g$ is obtained by mod $\pp$ level-raising $f$ at some $q\in\mathcal{L}'$, and the proof of Theorem~7.2 shows that 
\[
L^{\mathrm{alg}}(g/K)\not\equiv 0\pmod{\pp}
\]
(see bottom of p.~233). On the other hand, by the interpolation formula in \cite[Thm.~A]{chida-hsieh-p-adic-L-functions}, the image of  $\lambda_1(q)^2$ under the augmentation map $\Lambda/\pp^j\Lambda\rightarrow\mathscr{O}_\pp/\pp^j\mathscr{O}_\pp$ (corresponding to the evaluation at the trivial character $\mathds{1}$ of $\mathrm{Gal}(K_\infty/K)$) 
is given by
\[
e_p(g,\mathds{1})\cdot L^{\mathrm{alg}}(g/K)\pmod{\pp}
\]
up to a $p$-adic unit, where $e_p(g,\mathds{1})$ is a certain $p$-adic multiplier. Since $e_p(g,\mathds{1})\not\equiv 0\pmod{\pp}$ by the non-anomalous hypothesis on $p$, the result in the case $r=1$ follows. 

If $r\geqslant 3$, by the argument in the proof of Theorem~9.1 we can find a form $g_2$ of level $Nq_1q_2$, obtained by level-raising $f$ at two distinct admissible primes $q_1$ and $q_2$, with associated Selmer rank equal to $r-2$. By induction hypothesis, $g_2$ has a mod~$\pp$ level-raised form $g$ as desired, and therefore so does $f$.
\end{proof}

Now we have all the ingredients to prove Theorem~\ref{thm:main}.

\begin{proof}[Proof of Theorem~\ref{thm:main}]
By Theorem~\ref{thm:howard-bipartite} and Lemma~\ref{prop:prim}, it suffices to show that $\boldsymbol{\lambda}$ has nonzero image in $\Lambda/\pp\Lambda$, which under the hypotheses of Theorem~\ref{thm:main} has been shown in Proposition~\ref{prop:wei}, hence the result.
\end{proof}

\section{The $p$-adic $L$-function $\mathscr{L}_{\mathfrak{p}}^{\bdp}$}\label{sec:p-adicL}

In \cite{cas-hsieh1}, the $p$-adic $L$-function introduced by Bertolini--Darmon--Prasanna \cite{bertolini-darmon-prasanna-duke} for $N^-=1$ is shown to be nonzero, and its relation with a $\Lambda$-adic Heegner class via a Perrin-Riou regulator map---an explicit reciprocity law---is established. The aim of this section is to expound these results for a general $N^-$ satisfying (\ref{assu:gen_heeg}). Here we restrict to the weight $2$ case, as this will suffice for our purposes. 

\subsection{Construction of the $p$-adic $L$-function} 

The construction in this section refines work of Brooks \cite{brooks}. We keep the setting and notation introduced in $\S\ref{sec:Sel}$, and assume in addition: 
\[
\textrm{$p\mathcal{O}_K=\mathfrak{p}\overline{\mathfrak{p}}$ splits in $K$.}
\] 
As the case $N^-=1$ is covered in \cite[\S{3}]{cas-hsieh1}, we also assume that $N^-\neq 1$ is a squarefree product of an even number of primes. 

Let $B$ be a quaternion algebra over $\mathbf{Q}$ of discriminant $N^-$, and let $\mathcal{O}_B\subset B$ be a maximal order as in \cite[$\S2.1$]{brooks}. As before, $\widehat{B}=B\otimes_{\mathbf{Z}}\widehat{\mathbf{Z}}$ denotes the profinite completion of $B$, and we put 
\[
\widehat{B}^{(N^-)}=\{b\in\widehat{B}\;:\; x_q=1\;\textrm{for all}\;q\mid N^-\}.
\]
Define $\widehat{\mathbf{Q}}^{(N^-)}$ similarly, and fix an isomorphism $M_2(\widehat{\mathbf{Q}}^{(N^-)})\simeq\widehat{B}^{(N^-)}$. 

Let $\mathrm{Ig}_{N^+,N^-}$  be the Igusa scheme over $\mathbf{Z}_{(p)}$ classifying abelian surfaces with $\mathcal{O}_B$-multiplication and $\Gamma_1(N^+p^\infty)$-level structure. For any valuation ring $W$ finite flat over $\mathbf{Z}_{p}^{\mathrm{ur}}$, denote by $V_p^B(W)$ the space of $p$-adic modular forms over $W$: the space of formal functions on $\mathrm{Ig}_{N^+,N^-}$ over $W$.

Fix a decomposition $N^+\mathcal{O}_K=\mathfrak{N}^+\overline{\mathfrak{N}^+}$ and let $c$ be a positive integer prime to $Np$. Assume for simplicity that $c$ splits in $K$, and fix a decomposition $c\mathcal{O}_K=\mathfrak{C}\overline{\mathfrak{C}}$. Similarly as in (\ref{eq:complex-unif}) and (\ref{eq:CM}), by the complex uniformization
\[
\mathfrak{H}^\pm\times\widehat{B}^\times\rightarrow\mathrm{Ig}_{N^+,N^-}(\mathbf{C}),
\]
the element
\begin{equation}\label{eq:inj}
\xi_c:=\varsigma^{(\infty)}\gamma_c\in\mathrm{GL}_2(\widehat{\mathbf{Q}}^{(N^-)})\simeq\widehat{B}^{(N^-),\times}\hookrightarrow\widehat{B}^\times
\end{equation}
constructed in \cite[p.~577]{cas-hsieh1} defines a CM point $x_c\in\mathrm{Ig}_{N^+,N^-}(\mathbf{C})$ rational over $H_c(\mathfrak{p}^\infty)$, the compositum of $H_c$ with the ray class field of $K$ of conductor $\mathfrak{p}^\infty$. 
By Shimura's reciprocity law, for every $\mathcal{O}_c$-ideal $\mathfrak{a}$ prime to $\mathfrak{N}^+\mathfrak{p}$, letting $a\in\widehat{K}^{(cp),\times}$ be such that $\mathfrak{a}=a\widehat{\mathcal{O}}_c\cap K$ and
\[
\sigma_\mathfrak{a}:=\mathrm{rec}_K(a^{-1})\vert_{H_c(\mathfrak{p}^\infty)}\in\mathrm{Gal}(H_c(\mathfrak{p}^\infty)/K),
\]
the point $x_\mathfrak{a}:=x_c^{\sigma_\mathfrak{a}}$ is defined by the element $\overline{a}^{-1}\xi_c$. 
For $z\in\mathbf{Q}_p$ set
\[
\mathbf{n}(z):=\begin{pmatrix}1 & z\\ 0&1
\end{pmatrix}\in\mathrm{GL}_2(\mathbf{Q}_p)\subset\mathrm{GL}_2(\widehat{\mathbf{Q}}^{(N^-),\times})\hookrightarrow\widehat{B}^\times,
\]
and write $x_\mathfrak{a}*\mathbf{n}(z)$ for the CM point in $\mathrm{Ig}_{N^+,N^-}$ defined by $\overline{a}^{-1}\xi_c\mathbf{n}(z)$.

Let $f_B$ be an automorphic form on $B$ associated to $f$ under the Jacquet--Langlands correspondence, with the $p$-optimal normalization  in \cite[\S{5.1}]{burungale-II}. Consider the ``$p$-depletion'' 
\[
f_B^\flat:=f_B\vert(VU-UV),
\]
where $U$ and $V$ are the Hecke operators defined in e.g. \cite[\S{3.6}]{brooks}. Here we view $f_B$ and $f_B^\flat$ as defined over $\mathscr{O}_\pp$, and let $\widehat{f}_B$ and $\widehat{f}_B^\flat$ be their $p$-adic avatars in $V_p^B(\mathscr{O}_\pp^{\mathrm{ur}})$,  where $\mathscr{O}_\pp^{\mathrm{ur}}$ is the compositum of $\mathscr{O}_\pp$ and $\mathbf{Z}_p^{\mathrm{ur}}$. 

Put $\mathbf{x}_\mathfrak{a}:=x_{\mathfrak{a}}\otimes_{\mathscr{O}_\pp^{\mathrm{ur}}}\overline{\mathbf{F}}_p$ and let $t:\widehat{S}_{\mathbf{x}_\mathfrak{a}}\rightarrow\widehat{\mathbb{G}}_m$ be the Serre--Tate coordinate on the local deformation space $\widehat{S}_{\mathbf{x}_{\mathfrak{a}}}\hookrightarrow\mathrm{Ig}(N)_{/\mathscr{O}_\pp^{\mathrm{ur}}}$. Then $T:=t-1$ gives a canonical uniformizer for the coordinate ring of $\widehat{S}_{\mathbf{x}_{\mathfrak{a}}}$. The Serre--Tate expansion 
\[
\widehat{f}_B(t):=\widehat{f}_B\vert_{\widehat{S}_{\mathbf{x}_{\mathfrak{a}}}}\in\mathscr{O}_\pp^{\mathrm{ur}}\llbracket t-1\rrbracket
\]
defines a $p$-adic measure $\mathrm{d}\widehat{f}_B$ on $\mathbf{Z}_p$ characterized by 
\begin{equation}\label{eq:measure}
\int_{\mathbf{Z}_p}t^x\mathrm{d}\widehat{f}_B(x)=\widehat{f}_B(t).
\end{equation}
The $p$-depletion $\widehat{f}^\flat_B$ defines a measure $\mathrm{d}\widehat{f}_B^\flat$ on $\mathbf{Z}_p$ in the same manner, and it is easily seen that $\mathrm{d}\widehat{f}_B^\flat$ is supported on $\mathbf{Z}_p^\times$. Put 
\[
\widehat{f}_{B,\mathfrak{a}}^\flat(t):=\widehat{f}_{B}^\flat(t^{\mathrm{N}(\mathfrak{a})^{-1}\sqrt{-D_K}^{-1}}),
\]
which again defines a measure on $\mathbf{Z}_p^\times$ characterized as in (\ref{eq:measure}). The following result extends \cite[Prop.~3.3]{cas-hsieh1}.

\begin{prop}\label{prop:3.3}
Let $\phi:\mathbf{Z}_p^\times\rightarrow\mathcal{O}_{\mathbf{Z}_p}^\times$ be a non-trivial finite order character of conductor $p^n$. Then
\[
\int_{\mathbf{Z}_p^\times}\phi(x)t^x\mathrm{d}\widehat{f}_{B,\mathfrak{a}}^\flat(x)=p^{-n}\mathfrak{g}(\phi)\sum_{u\in(\mathbf{Z}/p^n\mathbf{Z})^\times}\phi^{-1}(u)\cdot\widehat{f}_B(x_{\mathfrak{a}}*\mathbf{n}(up^{-n}))
\]
where $\mathfrak{g}(\phi)=\sum_{u\in(\mathbf{Z}/p^n\mathbf{Z})^\times}\phi(u)\zeta_{p^n}^u$ is the Gauss sum.
\end{prop}

\begin{proof}
It suffices to prove an analogue of \cite[Lem.~3.2]{cas-hsieh1} in our setting, for which we shall argue as in \cite[Lem.~4.14]{brooks} to reduce to the elliptic curve case. Indeed, letting $\mathcal{A}_\mathfrak{a}$ denote an abelian surface with $\mathcal{O}_B$-multiplication corresponding to $x_\mathfrak{a}$ under the moduli interpretation of $\mathrm{Ig}_{N^+,N^-}$, by the discussion in \cite[p.~919]{prasanna} there is a degree prime-to-$p$ isogeny
\[
\lambda:\mathcal{A}_{\mathfrak{a}}\rightarrow\mathcal{E}_1\times\mathcal{E}_2,
\]
defined over a number field in which $p$ is unramified, between $\mathcal{A}_\mathfrak{a}$ and the   product of certain CM elliptic curves. Thus from \cite[Prop.~4.1]{brooks} and \cite[Lem.~3.2]{cas-hsieh1} we obtain that if $u\in\mathbf{Z}_p$ then
\[
t(x_{\mathfrak{a}}*\mathbf{n}(up^{-n}))=\zeta_{p^n}^{-u\mathbf{N}(\mathfrak{a})^{-1}\sqrt{-D_K}^{-1}}.
\]
The result now follows from \cite[Lem.~3.1]{cas-hsieh1}.
\end{proof}

Set 
\[
\widetilde{\Lambda}^{\mathrm{ur}}:=\mathscr{O}_\pp^{\mathrm{ur}}\llbracket\mathrm{Gal}(H_{p^\infty}/K)\rrbracket,\quad
\Lambda^{\mathrm{ur}}=\mathscr{O}_\pp^{\mathrm{ur}}\llbracket\mathrm{Gal}(K_\infty/K)\rrbracket. 
\]
Following \cite[Def.~3.7]{cas-hsieh1}, we introduce the following.

\begin{defn}\label{def:bdp}
Let $\psi$ be an auxiliary anticyclotomic Hecke character of $K$ of infinity type $(1,-1)$ and conductor $c$.
\begin{enumerate}
\item{} 
Let $\mathscr{L}_{\mathfrak{p},\psi}\in\widetilde{\Lambda}^{\mathrm{ur}}$ be the $p$-adic measure on $\mathrm{Gal}(H_{p^\infty}/K)$ defined by
\[
\mathscr{L}_{\mathfrak{p},\psi}(\xi)=
\sum_{[\mathfrak{a}]\in\mathrm{Pic}(\mathcal{O}_c)}
\psi(\mathfrak{a})\mathbf{N}(\mathfrak{a})^{-1}
\int_{\mathbf{Z}_p^\times}\psi_\mathfrak{p}(x)\xi(\mathrm{rec}_\mathfrak{p}(x)\sigma_\mathfrak{a}^{-1})\;\mathrm{d}\widehat{f}^\flat_{B,\mathfrak{a}}(x_{\mathfrak{a}})
\]
for all $\xi:\mathrm{Gal}(H_{p^\infty}/K)\rightarrow\mathcal{O}_{\mathbf{C}_p}^\times$, where $\psi_\mathfrak{p}$
is the component of $\psi$ at $\mathfrak{p}$ and  $\mathrm{rec}_{\mathfrak{p}}:K_\mathfrak{p}^\times\rightarrow\mathrm{Gal}(H_{p^\infty}/K)$ is the local reciprocity map. 
\item{} Let $\mathrm{tw}_\phi:\widetilde{\Lambda}^{\mathrm{ur}}\rightarrow\widetilde{\Lambda}^{\mathrm{ur}}$ be the map defined by $\gamma\mapsto\phi(\gamma)\gamma$ for $\gamma\in\mathrm{Gal}(H_{p^\infty}/K)$.  The $p$-adic $L$-function 
\[
\mathscr{L}_{\mathfrak{p}}^{\bdp}\in\Lambda^{\mathrm{ur}}
\] 
is the image of $\mathrm{tw}_{\psi^{-1}}(\mathscr{L}_{\mathfrak{p},\psi})$ under the natural projection $\widetilde{\Lambda}^{\mathrm{ur}}\rightarrow\Lambda^{\mathrm{ur}}$. 
\end{enumerate}
\end{defn}

\begin{rem}\label{rem:walds}
In light of the Waldspurger formula, the \emph{square} of $\mathscr{L}_\mathfrak{p}^\bdp$ is expected to interpolate the central critical $L$-values $L(f/K,\xi,1)$ for the Rankin--Selberg convolution of $f$ with theta series of weight $\ell\geqslant 3$ attached to certain anticyclotomic Hecke characters $\xi$. For $N^-=1$, this interpolation property is shown in \cite[Prop.~3.8]{cas-hsieh1} as a consequence of results in \cite[Thm.~A]{hsieh-doc}; for $N^-\neq 1$, based on Prasanna's explicit Waldspurger formula \cite[Thm.~3.2]{prasanna}, 
the interpolation is deduced in \cite[\S{8}]{brooks} for squarefree $N$ and $\xi$ crystalline at the primes above $p$. 
\end{rem}

\subsection{Explicit reciprocity law}

The next result relates the $p$-adic $L$-function $\mathscr{L}_\mathfrak{p}^{\bdp}$ to the element $\kappa_\infty$ in $(\ref{eq:dist})$. Recall that $\overline{\rho}:G_\mathbf{Q}\rightarrow\mathrm{GL}_2(\mathbf{F})$ denotes the Galois representation afforded by $A_f[\wp]$. In the following, we use the  superscript ``ur'' to denote extension of scalars to $\mathbf{Z}_p^{\mathrm{ur}}$.

\begin{thm}
\label{thm:cas-hsieh}
Suppose $\overline{\rho}\vert_{G_K}$ is absolutely irreducible. There exists an injective $\Lambda^{\mathrm{ur}}$-linear map
\[
\mathrm{Log}_{\mathfrak{p}}:\mathrm{H}^1_{}(K_\mathfrak{p},\mathrm{Fil}_\mathfrak{p}^+(\mathbf{T}))^{\mathrm{ur}}\hookrightarrow\Lambda^{\mathrm{ur}}
\]
with finite cokernel such that
\begin{equation}\label{eq:ERL}
\mathrm{Log}_{\mathfrak{p}}(\mathrm{loc}_{\mathfrak{p}}(\kappa_\infty))=-\mathscr{L}_{\mathfrak{p}}^{\bdp}\cdot\sigma_{-1,\mathfrak{p}},
\end{equation}
where $\sigma_{-1,\mathfrak{p}}\in\mathrm{Gal}(K_\infty/K)$ has order two. 
\end{thm}

\begin{proof}
The construction of the map $\mathrm{Log}_{\mathfrak{p}}$ is given in \cite[Thm.~5.1]{cas-hsieh1} (note that
the injectivity of this map is not explicitly stated in \emph{loc.cit.}, but it follows from \cite[Prop.~4.11]{LZ2}). 
For $N^-=1$, $(\ref{eq:ERL})$ is just the weight $2$ case of the ``explicit reciprocity law'' in \cite[Thm.~5.7]{cas-hsieh1}. We explain how to extend that result to the case at hand. 

Let $\chi:\mathrm{Gal}(K_\infty/K)\rightarrow\mu_{p^\infty}$ be the $p$-adic avatar of ring class character of conductor $p^n\mathcal{O}_K$, with $n>1$. Following the calculations in \cite[pp.~598-9]{cas-hsieh1} (with Proposition~3.3 in \emph{loc.cit.} replaced by the above Proposition~\ref{prop:3.3}) we find: 
\begin{equation}\label{eq:calc}
\begin{split}
\mathscr{L}_{\mathfrak{p}}^{\bdp}(\chi^{-1})&=
\sum_{[\mathfrak{a}]\in\mathrm{Pic}(\mathcal{O}_c)}\chi^{-1}(\mathfrak{a})\cdot\bigl(\theta^{-1}\widehat{f}^\flat_B\otimes\chi_{\mathfrak{p}}^{-1}\bigr)(x_\mathfrak{a})\\
&=p^{-n}\mathfrak{g}(\chi_\mathfrak{p}^{-1})\chi_{\mathfrak{p}}(p^n)
\sum_{\sigma\in\mathrm{Gal}(H_{cp^n}/K)}\chi(\sigma)\cdot\theta^{-1}\widehat{f}^\flat_B(x_{cp^n}^\sigma).
\end{split}
\end{equation}
Here 
\[
\theta^{-1}\widehat{f}^\flat_B:=\lim_{i\to\infty}\theta^{-1+p^i(p-1)}\widehat{f}_B
\]
where $\theta$ is the Katz $p$-adic differential operator acting  as $t\frac{d}{dt}$ on the $t$-expansions. Since by \cite[Prop.~7.4]{brooks} and \cite[Prop.~A.0.1]{LZZ}, the term $\theta^{-1}\widehat{f}^\flat_B(x_{cp^n}^\sigma)$ computes the image of 
\[
\iota_1(x_1(cp^n))\in J(X_{N^+,N^-})(H_{cp^n})\otimes_{\mathbf{Z}}\mathscr{O}_\pp
\]
(\emph{cf.} (\ref{eq:CM}), (\ref{eq:emb})) under the formal group logarithm
\[
\mathrm{log}_{\omega_f}:J(X_{N^+,N^-})(H_{cp^n,w})\otimes_{\mathbf{Z}}\mathscr{O}_\pp\rightarrow H_{cp^n,w}
\] 
associated to the differential $\omega_f$, where $H_{cp^n,w}$ denotes the completion of $H_{cp^n}$ at the prime above $\mathfrak{p}$ induced by our fixed embedding $\iota_p$, substituting this into $(\ref{eq:calc})$ the argument in \cite[Thm.~5.7]{cas-hsieh1} applies verbatim to yield the  proof of (\ref{eq:ERL}). 
\end{proof}

\begin{cor}\label{cor:Lneq0}
Suppose $\overline{\rho}\vert_{G_K}$ is absolutely irreducible. Then $\mathrm{loc}_{\mathfrak{p}}(\kappa_\infty)$ is not $\Lambda$-torsion. In particular, the $p$-adic $L$-function $\mathscr{L}_\mathfrak{p}^{\bdp}$ is nonzero.
\end{cor}

\begin{proof}
Recall that the class $\kappa_\infty$ is nonzero by \cite{cornut-vatsal}. Therefore for all but finitely many characters $\chi:\mathrm{Gal}(K_\infty/K)\rightarrow\mathcal{O}_{\mathbf{C}_p}^\times$ factoring through $\mathrm{Gal}(K_n/K)$ for some $n\geqslant 0$, the image $\kappa^\chi$ of $\kappa_\infty$ under the specialization map
\[
\mathrm{H}^1(K,\mathbf{T})\rightarrow\mathrm{H}^1(K,T\otimes\chi)\simeq\mathrm{H}^1(K_n,T)^{(\chi)}
\]
is nonzero. Here $\mathrm{H}^1(K_n,T)^{(\chi)}$ denote the $\chi$-isotypic component of $\mathrm{H}^1(K_n,T)$ under the action of $\mathrm{Gal}(K_n/K)$. Since by construction $\kappa^\chi$ arises as the image of the twisted Heegner point
\[
y_\chi:=\alpha_p^{-n}\sum_{\sigma\in\mathrm{Gal}(H_{p^n}/K)}\chi^{-1}(\sigma)\otimes\iota_1(x_1(p^n))^\sigma\in E(K_n)^{(\chi)},
\]
by \cite[Thm.~3.2]{nekovar-euler-systems} it follows that if $\kappa^\chi\neq 0$ then both $\sha(E/K_n)^{(\chi)}$ and the quotient of $E(K_n)^{(\chi)}$ by the submodule generated by $y_\chi$ are finite. Since $E(K_n)$ injects into $E(K_{n,v})$, it follows that for any prime $v$ of $K_n$ above $\mathfrak{p}$, we have the implication
\[
\kappa^\chi\neq 0\;\Longrightarrow\;\mathrm{loc}_v(\kappa^\chi)\neq 0.
\]
Letting $\chi$ as above vary, this shows that $\mathrm{loc}_\mathfrak{p}(\kappa_\infty)$ is not $\Lambda$-torsion. The last claim in the corollary then follows from Theorem~\ref{thm:cas-hsieh}.
\end{proof}

\begin{rem}
The nonvanishing of $\mathscr{L}_{\mathfrak{p}}^{\bdp}$ can also be shown following Hida's methods. For $N^-=1$, this is done in \cite[Thm.~3.9]{cas-hsieh1} as an application of \cite[Thm.~C]{hsieh-doc}, and for $N^-\neq 1$ the result can be similarly deduced from \cite{burungale-II}.
\end{rem}

\section{Proof of Theorem~B}\label{sec:equiv-IMC}

As in the preceding section, we keep the setting and notations introduced in $\S\ref{sec:Sel}$, and assume in addition that $p\mathcal{O}_K=\mathfrak{p}\overline{\mathfrak{p}}$ splits in $K$.

Consider the following variants of the Selmer groups $\mathrm{Sel}(K,\mathbf{T})$ and $\mathrm{Sel}(K,\mathbf{W})$ in $\S\ref{sec:Sel}$ obtained by changing the local condition at the primes above $p$. Let $M$ denote either $\mathbf{T}$ or $\mathbf{W}$. For $w$ a prime of $K$ above $p$, set
\begin{align*}
\mathrm{H}^1_{\emptyset}(K_w, M) & = \mathrm{H}^1(K_w, M), \\
\mathrm{H}^1_{\mathrm{ord}}(K_w, M) & = \mathrm{H}^1(K_w,\mathrm{Fil}^+(M)), \\
\mathrm{H}^1_{0}(K_w, M) & = \{0\},
\end{align*}
and for $\bullet,\circ \in \lbrace \emptyset, \mathrm{ord}, 0 \rbrace$ define
\[
\mathrm{Sel}_{\bullet,\circ}(K,M) 
:= \mathrm{ker} \biggr\{ 
\mathrm{H}^1(K_{\Sigma}/K,M) \to \dfrac{\mathrm{H}^1(K_{\mathfrak{p}}, M)}{\mathrm{H}^1_{\bullet}(K_{\mathfrak{p}},M)} \times \dfrac{\mathrm{H}^1(K_{\overline{\mathfrak{p}}},M)}{\mathrm{H}^1_{\circ}(K_{\overline{\mathfrak{p}}},M)} \times \prod_{w \in \Sigma, w \nmid p} \dfrac{\mathrm{H}^1(K_w,M)}{\mathrm{H}^1_{\mathcal{F}_{\mathrm{ord}}}(K_w,M)} \biggl\}.
\]

In particular, $\mathrm{Sel}_{\mathrm{ord},\mathrm{ord}}(K,M)$ is the same as the earlier $\mathrm{Sel}(K,M)$. For the ease of notation, we also set 
\[
\mathcal{S}_{\bullet,\circ}=\mathrm{Sel}_{\bullet,\circ}(K,\mathbf{T}),\quad
\mathcal{X}_{\bullet,\circ}:=\mathrm{Sel}_{\bullet,\circ}(K,\mathbf{W})^\vee,
\]
so $\mathcal{X}_{\mathrm{ord},\mathrm{ord}}$ is the same as the earlier $\mathcal{X}$.

Denote by $\mathscr{O}_\pp^{\mathrm{ur}}$ the compositum of $\mathscr{O}$ with $\mathbf{Z}_p^{\mathrm{ur}}$, and set 
\[
\Lambda^{\mathrm{ur}}=\mathscr{O}_\pp^{\mathrm{ur}}\llbracket\mathrm{Gal}(K_\infty/K)\rrbracket.
\]
In this section we prove the following result, which implies Theorem~B in the introduction.

\begin{thm}\label{thm:main-BDP}
Suppose $p\nmid 6N$ and $\pp$ is a prime of $\mathscr{O}$ above $p$ such that the following hold:
\begin{itemize}
\item $f$ is ordinary at $\pp$,
\item Hypothesis~$\heartsuit$ holds for $(f,\pp,K)$, 
\item $\overline{\rho}_0$ is surjective,
\item $p$ is non-anomalous.
\end{itemize}
Then $\mathcal{X}_{\emptyset,0}$ is $\Lambda$-torsion, and
\[
\mathrm{Char}_\Lambda(\mathcal{X}_{\emptyset,0})=(\mathscr{L}_{\mathfrak{p}}^{\bdp})^2
\]
as ideals in $\Lambda^{\mathrm{ur}}$. 
\end{thm}

After Theorem~\ref{thm:main}, the proof will be an immediate consequence of the next result, showing in particular that Conjecture~\ref{conj:BDP} is equivalent to Conjecture~\ref{conj:HPMC} when $p$ splits in $K$.

\begin{thm}\label{thm:equiv-imc}
Assume that $\mathrm{H}^0(G_K,\overline{\rho}_0)=0$. Then the following are equivalent:
\begin{enumerate}
\item[(i)] Both $\mathcal{S}$ and 
$\mathcal{X}$ have $\Lambda$-rank one, and the following divisibility holds in $\Lambda$:
\[
\mathrm{Char}_\Lambda(\mathcal{X}_{\mathrm{tors}})\supset\mathrm{Char}_\Lambda\bigl(\mathcal{S}/\Lambda\kappa_\infty\bigr)^2.
\]
\item[(ii)] Both $\mathcal{S}_{0,\emptyset}$ and 
$\mathcal{X}_{\emptyset,0}$ are $\Lambda$-torsion, and the following divisibility holds in $\Lambda^{\mathrm{ur}}$: 
\[
\mathrm{Char}_\Lambda(\mathcal{X}_{\emptyset,0})\supset(\mathscr{L}_{\mathfrak{p}}^{\bdp})^2.
\]
\end{enumerate}
Moreover, the same result holds for the opposite divisibilities.
\end{thm}

\begin{proof}
We begin by noting that, since $\mathrm{Gal}(K_\infty/K)$ is a pro-$p$ group, our hypothesis implies that $\mathrm{H}^0(G_{K_\infty},\bar{\rho}_0)=0$, and so $\mathrm{H}^1(K_\Sigma/K,\mathbf{T})$ is torsion-free by \cite[\S{1.3.3}]{perrin-riou-book}. As we shall explain in the next paragraphs, the Theorem can be extracted from \cite[App.~A]{castella-beilinson-flach}; all the references in the rest of this proof will be to results in that appendix. (Note that the Selmer groups considered in \cite{castella-beilinson-flach} have the unramified local condition at all primes $w\nmid p$, but the same arguments apply \emph{verbatim} to the Selmer groups we consider here. Note also, although this is not needed for our arguments, that by \cite[\S{5}]{pw-mu} both Selmer groups are the same if $\overline{\rho}$ is ramified at all primes $\ell\mid N^-$.) 

We first show that $\mathcal{X}$ has $\Lambda$-rank one if and only if $\mathcal{X}_{\emptyset,0}$ is $\Lambda$-torsion. 	
If $\mathcal{X}$ has $\Lambda$-rank one, then $\mathcal{S}$ has $\Lambda$-rank one by Lemma~2.3(1), and so  $\mathcal{X}_{\emptyset,0}$ is $\Lambda$-torsion by Lemma~A.4. Conversely, if $\mathcal{X}_{\emptyset,0}$ is $\Lambda$-torsion, then $\mathcal{X}_{\mathrm{ord},0}$ is also $\Lambda$-torsion (see eq.~(A.7)), and so $X_{\mathrm{ord},\emptyset}$ has $\Lambda$-rank one by Lemma~2.3(2). Global duality yields the exact sequence
\begin{equation}\label{eq:PT}
0\rightarrow\mathrm{coker}\bigl(\mathrm{loc}_{\mathfrak{p}}:\mathcal{S}\rightarrow \mathrm{H}^1_{\mathrm{ord}}(K_{\mathfrak{p}},\mathbf{T})\bigr)\rightarrow
\mathcal{X}_{\emptyset,\mathrm{ord}}\rightarrow
\mathcal{X}\rightarrow 0.
\end{equation}
Since $\mathrm{H}^1_{\mathrm{ord}}(K_{\mathfrak{p}},\mathbf{T})$ has $\Lambda$-rank one, the left term in this sequence is $\Lambda$-torsion by Theorem~A.1 and the nonvanishing of $\mathscr{L}_\mathfrak{p}^{\bdp}$, and since the right term is isomorphic to $\mathcal{X}_{\mathrm{ord},\emptyset}$ by the action complex conjugation, and hence is of $\Lambda$-rank one by the above analysis, we conclude from $(\ref{eq:PT})$ that $\mathcal{X}$ also has $\Lambda$-rank one. 
	
To relate the divisibilities, assume that $\mathcal{X}$ has $\Lambda$-rank one. By Lemma~2.3(1), this amounts to the assumption that $\mathcal{S}$ has $\Lambda$-rank one, and so by Lemma~A.3 for every height one prime $\mathfrak{P}$ of $\Lambda$ we have
	\begin{equation}\label{eq:A3}
	\mathrm{length}_{\mathfrak{P}}(\mathcal{X}_{\emptyset,0})=\mathrm{length}_{\mathfrak{P}}(\mathcal{X}_{\mathrm{tors}})+2\;\mathrm{length}_{\mathfrak{P}}(\mathrm{coker}(\mathrm{loc}_{\mathfrak{p}})),
	\end{equation} 
and by Lemma~A.4 for every height one prime $\mathfrak{P}'$ of $\Lambda^{\mathrm{ur}}$  we have
	\begin{equation}\label{eq:A4}
	\mathrm{ord}_{\mathfrak{P}'}(\mathscr{L}_{\mathfrak{p}}^{\bdp})=\mathrm{length}_{\mathfrak{P}'}(\mathrm{coker}(\mathrm{loc}_{\mathfrak{p}})\Lambda^{\mathrm{ur}})+\mathrm{length}_{\mathfrak{P}'}\bigl(\mathcal{S}^{\mathrm{ur}}/\Lambda^{\mathrm{ur}}\kappa_\infty\bigr),
	\end{equation}
	where $\mathcal{S}^{\mathrm{ur}}$ denotes the extension of scalars of $\mathcal{S}$ to $\Lambda^{\mathrm{ur}}$. Thus for any height one prime $\mathfrak{P}\subset\Lambda$, letting $\mathfrak{P}'$ denote its extension to $\Lambda^{\mathrm{ur}}$, we see from $(\ref{eq:A3})$ and $(\ref{eq:A4})$ that
	\[
	\mathrm{length}_{\mathfrak{P}}(\mathcal{X}_{\mathrm{tors}})\leqslant 2\;\mathrm{length}_{\mathfrak{P}}\bigl(\mathcal{S}/\Lambda\kappa_\infty\bigr)\quad\Longleftrightarrow\quad
	\mathrm{length}_{\mathfrak{P}}(\mathcal{X}_{\emptyset,0})\leqslant 2\;\mathrm{ord}_{\mathfrak{P}'}(\mathscr{L}_{\mathfrak{p}}^{\bdp}),
	\]
	and similarly for the opposite inequalities. The result follows. 
\end{proof}

\begin{proof}[Proof of Theorem~\ref{thm:main-BDP}]
Since Theorem~\ref{thm:main} holds under the given hypotheses, the result follows from the equivalence in Theorem~\ref{thm:equiv-imc}. 
\end{proof}

\appendix

\section{An alternative approach in rank one}

In this appendix we give an alternative proof\footnote{Appeared in an earlier version of this paper released in June 2018; see \cite{bck-old}.} of the following special case of Theorem~\ref{thm:main} and Theorem~\ref{thm:main-BDP}, but which does \emph{not} require the hypothesis that $p$ is non-anomalous.

\begin{thm}\label{thm:main-rank1}
Let $E/\mathbf{Q}$ be an elliptic curve of condition $N$, let $p>3$ be a prime where $E$ has good ordinary reduction, and let $K$ an imaginary quadratic field of discriminant prime to $Np$ satisfying hypotheses $\mathrm{(\ref{assu:disc})}$ and $\mathrm{(\ref{assu:gen_heeg})}$. Suppose in addition that the following conditions hold: 
\begin{itemize}
\item $(E, K ,p)$ satisfies Hypothesis~$\spadesuit$,
\item $\overline{\rho}:G_\mathbf{Q}\rightarrow\mathrm{Aut}_{\mathbf{F}_p}(E[p])$ is surjective,  
\item $p\mathcal{O}_K = \mathfrak{p}\overline{\mathfrak{p}}$ splits in $K$, 
\item $\mathrm{ord}_{s=1}L(E/K,s)=1$. 
\end{itemize}
Then Conjecture~\ref{conj:HPMC} and Conjecture~\ref{conj:BDP} hold.
\end{thm}

The proof of Theorem~\ref{thm:main-rank1} will occupy the remainder of this appendix. After possibly changing $E$ within its isogeny class, we shall assume that $E$ is ``$(\mathbf{Z},p\mathbf{Z}_p)$-optimal'' in the sense of \cite[\S{3.7}]{wei-zhang-mazur-tate}. Denote by $H_K$ the Hilbert class field of $K$, and let
\[
x_1\in\mathrm{CM}(X_{N^+,N^-})\cap X_{N^+,N^-}(H_K)
\]
be the Heegner point constructed in $(\ref{eq:CM})$ (i.e., taking $m=p^n=1$). Letting $\pi:J(X_{N^+,N^-})\rightarrow E$ be the quotient map, we set
\[
z_1:=\pi(\iota_1(x_1))\in E(H_K)\otimes_{\mathbf{Z}}\mathbf{Z}_p,
\]
where $\iota_1$ is as in $(\ref{eq:emb})$. By the Gross--Zagier formula \cite{gross-zagier-original, yuan-zhang-zhang, explicit-gross-zagier-waldspurger} we have
\begin{equation}\label{eq:GZ}
L'(E/K,1)\neq 0\quad\Longleftrightarrow\quad z_K:=\textrm{Tr}_{H_K/K}(z_1)\;\textrm{in non-torsion}.
\end{equation}

Upon the choice of a topological generator $\gamma\in\mathrm{Gal}(K_\infty/K)$, it will be convenient to view the $p$-adic $L$-function $\mathscr{L}_{\mathfrak{p}}^{\bdp}$ of $\S\ref{sec:p-adicL}$ as an element in the power series ring $\mathbf{Z}_p^{\mathrm{ur}}\llbracket{T}\rrbracket$ via the isomorphism $\Lambda^{\mathrm{ur}}\simeq\mathbf{Z}_p^{\mathrm{ur}}\llbracket{T}\rrbracket$ sending $\gamma-1\mapsto T$.

The starting point of the approach in this appendix is the $p$-adic Waldspurger formula due to Bertolini--Darmon--Prasanna \cite{bertolini-darmon-prasanna-duke}, which corresponds to the specialization of $(\ref{eq:ERL})$ at the trivial character of $\mathrm{Gal}(K_\infty/K)$.

\begin{prop}[Bertolini--Darmon--Prasanna, Brooks]\label{thm:bdp}
Assume that:
\begin{itemize}
\item $p\mathcal{O}_K = \mathfrak{p}\overline{\mathfrak{p}}$ splits in $K$, 
\item $E[p]$ is irreducible as a $G_K$-module. 
\end{itemize}
Then 
\[
\mathscr{L}^{\bdp}_{\mathfrak{p}}(0) = \biggl( \frac{1-a_p+p}{p} \biggr) \cdot \bigl( \mathrm{log}_{\omega_E} z_K \bigr),
\]
where  the equality is up to a $p$-adic unit.
\end{prop}

\begin{proof}
This is a special case of \cite[Thm.~5.13]{bertolini-darmon-prasanna-duke} ($N^-=1$) and \cite[Thm.~1.1]{brooks} ($N^-\neq 1$), as explained in Propositions~5.1.6 and 5.1.7 of \cite{jetchev-skinner-wan}. 
\end{proof}

The following result is a consequence of the ``anticyclotomic control theorem'' of \cite[\S{3}]{jetchev-skinner-wan}.

\begin{prop}\label{thm:equiv-spval}
Assume that: 
\begin{itemize}
\item{} $p\mathcal{O}_K=\mathfrak{p}\overline{\mathfrak{p}}$ splits in $K$, 
\item{} $E[p]$ is irreducible as a $G_K$-module, 
\item{} $\mathrm{rank}_{\mathbf{Z}}E(K)=1$ and $\#\sha(E/K)[p^\infty]<\infty$,
\end{itemize}
and let $P\in E(K)$ be a point of infinite order.  
Then $\mathcal{X}_{\emptyset, 0}$ is $\Lambda$-torsion, and letting
\[
f_{\emptyset,0}(T)\in\mathbf{Z}_p\llbracket T \rrbracket
\]
be a generator of the characteristic ideal of $\mathcal{X}_{\emptyset,0}$, the following equivalence holds:
	\[
	f_{\emptyset,0}(0)\sim_p\biggl(\dfrac{1-a_p+p}{p}\biggr)^2\cdot\log_{\omega_E}(P)^2\quad\Longleftrightarrow\quad[E(K):\mathbf{Z}.P]^2\sim_p\#\sha(E/K)[p^\infty]\prod_{\ell\mid N^+}c_\ell^2,
	\]
where  $c_\ell$ is the Tamagawa number of $E/\mathbf{Q}_\ell$, and $\sim_p$ denotes equality up to a $p$-adic unit.
\end{prop}

\begin{proof}
As shown in \cite[pp.~395-6]{jetchev-skinner-wan}, our assumptions imply hypotheses (corank~1), (sur), and (irred$_{\mathcal{K}}$) of \cite[\S{3.1}]{jetchev-skinner-wan}, and so by [\emph{op.cit.}, Thm.~3.3.1] (with $S=S_p$ the set of primes dividing $N$ and $\Sigma=\emptyset$) the module $\mathcal{X}_{\emptyset, 0}$ is $\Lambda$-torsion, and  
\begin{equation}\label{eq:control}
\# \mathbf{Z}_p / f_{\emptyset, 0}(0) = \#\mathrm{H}^1_{\mathcal{F}_{\mathrm{ac}}}(K,E[p^\infty]) \cdot C^{}(E[p^\infty]),
\end{equation}
where $\mathrm{H}^1_{\mathcal{F}_{\mathrm{ac}}}(K,E[p^\infty])$ is the anticyclotomic Selmer group introduced in \cite[\S{2.2.3}]{jetchev-skinner-wan} and 
\[
C^{}(E[p^\infty]) := \# \mathrm{H}^0(K_{\mathfrak{p}}, E[p^\infty]) \cdot \# \mathrm{H}^0(K_{\overline{\mathfrak{p}}}, E[p^\infty]) \cdot \prod_{w\mid N^+} \# \mathrm{H}^1_{\mathrm{ur}}(K_w, E[p^\infty]).
\]

Under our hypotheses, by [\emph{op.cit.}, (3.5.d)] the Selmer group $\mathrm{H}^1_{\mathcal{F}_{\mathrm{ac}}}(K,E[p^\infty])$ is finite, with order given by 
\begin{equation}\label{eq:ev-index}
	\#\mathrm{H}^1_{\mathcal{F}_{\mathrm{ac}}}(K,E[p^\infty])
	=
	\# \sha(E/K)[p^\infty] \cdot \biggl(  \dfrac{ \# (\mathbf{Z}_p / ( \frac{1-a_p+p}{p}) \cdot \mathrm{log}_{\omega_E} P) }{ [E(K): \mathbf{Z}.P]_p \cdot \# \mathrm{H}^0(K_{\mathfrak{p}}, E[p^\infty]) } \biggr)^2,
	\end{equation}
where $[E(K): \mathbf{Z}.P]_p$ 
	denotes the $p$-part of the index $[E(K): \mathbf{Z}.P]$. Combining $(\ref{eq:control})$ and $(\ref{eq:ev-index})$ we thus arrive at
	\[
	\# \mathbf{Z}_p / f_{\emptyset, 0}(0) = 
	\# \sha(E/K)[p^\infty]\cdot\biggl(  \dfrac{ \# (\mathbf{Z}_p / ( \frac{1-a_p+p}{p}) \cdot \mathrm{log}_{\omega_E} P) }{ [E(K): \mathbf{Z}.P]_p }\biggr)^2\cdot\prod_{w\mid N^+} \# \mathrm{H}^1_{\mathrm{ur}}(K_w, E[p^\infty]).
	\]
	
Since $\#\mathrm{H}^1_{\mathrm{ur}}(K_w,E[p^\infty])$ is the $p$-part of the Tamagawa number of $E/K_w$ (see e.g. \cite[Lem.~9.1]{skinner-zhang}) and the primes $\ell\mid N^+$ split in $K$, 
the result follows.
\end{proof}

By construction, the point $z_K$ in $(\ref{eq:GZ})$ is a $p$-adic unit multiple of a point $y_K\in E(K)$, so by (\ref{eq:GZ}) $y_K$ has infinite order if and only if $L'(E/K,1)\neq 0$.

\begin{thm}[Kolyvagin, W.~Zhang]\label{thm:ppart}
Assume that:
\begin{itemize}
\item $p\nmid 6N$ is a prime where $E$ has ordinary reduction,
\item $(E,p,K)$ satisfies Hypothesis~$\spadesuit$,
\item $\overline{\rho}$ is surjective,
\item $y_K\in E(K)$ has infinite order. 
\end{itemize}
Then for all primes $\ell\mid N^+$ the Tamagawa numbers $c_\ell$ are $p$-adic units and
\begin{equation}\label{eq:ppartBSD}
\#\sha(E/K)[p^\infty]\sim_p[E(K):\mathbf{Z}.y_K]^2.
\end{equation}
\end{thm}

\begin{proof}
For the first claim, note that if $\ell\Vert N^+$ then $c_\ell$ is a $p$-adic unit by part (i) of Hypothesis~$\spadesuit$ in the introduction, while if $\ell^2\mid N^+$ then by \cite[Lem.~5.1(2)]{wei-zhang-mazur-tate} the group $\mathrm{H}^1(\mathbf{Q}_\ell,E[p])$ vanishes, so from \cite[Lem.~6.3]{wei-zhang-mazur-tate} we see that $c_\ell$ is also a $p$-adic unit. 

On the other hand, as shown in \cite[Thm.~10.2]{wei-zhang-mazur-tate}, equality~(\ref{eq:ppartBSD}) follows from Kolyvagin's structure theorem for $\sha(E/K)[p^\infty]$ and the proof of Kolyvagin's conjecture, \cite[Thm.~9.3]{wei-zhang-mazur-tate}.
\end{proof}

The last ingredient we need is the following useful commutative algebra result from  \cite{skinner-urban}.

\begin{lem}\label{lem:3.2}
Let $A$ be a local ring, and assume that $\mathfrak{a}\subset A$ is a proper ideal such  that $A/\mathfrak{a}$ is an integral domain. Let $L\in A$, let $I\subset A$ be an ideal contained in $(L)$, and denote by $\overline{L}$ and $\overline{I}$ their reductions modulo $\mathfrak{a}$. If $\overline{L}\neq 0$ and $\overline{L} \in \overline{I}$, then $I = (L)$.
\end{lem}

\begin{proof}
This is a special case of \cite[Lem.~3.2]{skinner-urban}.
\end{proof}

For our application, we shall take $A = \Lambda$, $\mathfrak{a}\subset \Lambda$ the augmentation ideal, $L$ a characteristic power series for $\mathcal{X}_{\emptyset,0}$, and $I$ the ideal generated by the square of $\mathscr{L}_{\mathfrak{p}}^{\bdp}$. 

\begin{proof}[Proof of Theorem~\ref{thm:main-rank1}]
By the first part of Theorem~\ref{thm:bipartite}, both $\mathcal{S}$ and $\mathcal{X}$ have $\Lambda$-rank one, and the $\Lambda$-torsion submodule $\mathcal{X}_{\mathrm{tors}}$ of $\mathcal{X}$ is such that
\begin{equation}\label{eq:howard-div}
\mathrm{Char}_\Lambda(\mathcal{X}_{\mathrm{tors}})\supset\mathrm{Char}_\Lambda\bigr(\mathcal{S}/\Lambda\kappa_\infty\bigr)^2.
\end{equation}
By Theorem~\ref{thm:equiv-imc}, this implies that $\mathcal{X}_{\emptyset,0}$ is $\Lambda$-torsion, with
\begin{equation}\label{eq:howard-div-bdp}
\mathrm{Char}_\Lambda(\mathcal{X}^{}_{\emptyset,0})\supset(\mathscr{L}_{\mathfrak{p}}^{\bdp})^2
\end{equation}
as ideals in $\Lambda^{\mathrm{ur}}$. Let $f_{\emptyset,0}(T)\in\Lambda$ be a characteristic power series for $\mathcal{X}^{}_{\emptyset,0}$, viewed as an element in $\mathbf{Z}_p\llbracket T\rrbracket$. Since $L'(E/K,1)\neq 0$ by hypothesis, $y_K\in E(K)$ has infinite order by the Gross--Zagier formula, and from Theorem~\ref{thm:ppart} and Proposition~\ref{thm:equiv-spval} we deduce that 
\[
f_{\emptyset,0}(0)\sim_p\biggl(\dfrac{1-a_p+p}{p}\biggr)^2\cdot\log_{\omega_E}(y_K)^2,
\]
which in particular shows that $f_{\emptyset,0}(0)\neq 0$. Since the points $y_K$ and $z_K$ differ by a $p$-adic unit, by Proposition~\ref{thm:bdp} it follows that
\begin{equation}\label{eq:equiv-spval}
f_{\emptyset,0}(0)\sim_p\mathscr{L}_{\mathfrak{p}}^{\bdp}(0)^2,\quad\quad\textrm{with $f_{\emptyset,0}(0)\neq 0$.}
\end{equation} 
 
In light of Lemma~\ref{lem:3.2}, from $(\ref{eq:equiv-spval})$ we deduce that the divisibility $(\ref{eq:howard-div-bdp})$ is an equality, yielding the proof of Conjecture~\ref{conj:BDP}. By Theorem~\ref{thm:equiv-imc}, the equality in (\ref{eq:howard-div}), and hence Conjecture~\ref{conj:HPMC} follows from this, concluding the proof of Theorem~\ref{thm:main-rank1}.
\end{proof}

\begin{rem}\label{rem:KS}
In the terminology of \cite{mazur-rubin-book}, W.~Zhang's theorem \cite[Thm.~9.3]{wei-zhang-mazur-tate} on Kolyvagin's conjecture may be interpreted as establishing \emph{primitivity}  
of the ``Heegner point Kolyvagin systems'' $\{\kappa_n\}_n$ constructed by Howard \cite[$\S{1.7}$]{howard-kolyvagin}, \cite[\S{2.3}]{howard-gl2-type}.  
Mazur--Rubin also introduced the 
notion of $\Lambda$-\emph{primitivity}, 
and letting $\{\boldsymbol{\kappa}_n\}_n$ be Howard's $\Lambda$-adic Heegner point Kolyvagin system \cite[$\S{2.3}$]{howard-kolyvagin}, \cite[$\S{3.4}$]{howard-gl2-type}, our approach to Theorem~\ref{thm:main-rank1} may be seen as a realization\footnote{In his Princeton senior thesis, M.~Zanarella \cite{murilo} has developed this idea to obtain a different proof of our Theorem~\ref{thm:main} by incorporating primitivity into Howard's theory of Kolyvagin systems \cite{howard-kolyvagin, howard-gl2-type}.} of the implications
\[
\textrm{$\{\kappa_n\}_n$ is primitive}\;\Longrightarrow\;
\textrm{$\{\boldsymbol{\kappa}\}_n$ is $\Lambda$-primitive}\;\Longrightarrow\;
\textrm{Conjecture~\ref{conj:HPMC} holds},
\]
which then also implies Conjecture~\ref{conj:BDP} by the equivalence in Theorem~\ref{thm:equiv-imc}.
\end{rem}

\begin{rem}\label{rem:higher-rank} 
The assumption that $L'(E/K,1)\neq 0$ should not be essential to the method of proof of Theorem~\ref{thm:main-rank1}. Indeed, by Cornut--Vatsal \cite{cornut-vatsal}, regardless of the order of vanishing of $L(E/K,s)$ at $s=1$, the Heegner points $z_n:=\pi(\iota_1(x_1(p^n)))\in E(H_{p^n})\otimes_{\mathbf{Z}}\mathbf{Z}_p$ are non-torsion for $n\gg 0$. For such $n$, letting 
\[
z_{n,\chi}\in E(H_{p^n})^\chi\subset E(H_{p^n})\otimes_{\mathbf{Z}[\mathrm{Gal}(H_{p^n}/K)]}\mathbf{Z}_p[\chi]
\] 
be the image of $y_n$ in the $\chi$-isotypical component for a primitive character $\chi:\mathrm{Gal}(H_{p^n}/K)\rightarrow\mathbf{Z}[\chi]^\times$, the Gross--Zagier formula \cite{yuan-zhang-zhang} combined with W.~Zhang's work \cite{wei-zhang-mazur-tate} and a generalization of Kolyvagin's structure theorem  for Tate--Shafarevich  groups should yield an analogue of $(\ref{eq:ppartBSD})$ in terms of the index of 
$z_{n,\chi}$ in $E(K_n)^\chi$. 

With these results in hand, to remove the analytic rank one hypothesis from Theorem~\ref{thm:main-rank1} it would suffice to generalize our reduction of Conjecture~\ref{conj:HPMC} to the corresponding analogue of (\ref{eq:ppartBSD}). This would provide an alternate proof of our main results in this paper (namely, Theorem~\ref{thm:main} and Theorem~\ref{thm:main-BDP}) without the need to assume that $p$ is non-anomalous.
\end{rem}

\bibliographystyle{amsalpha}
\bibliography{bck}

\end{document}